%
%
%
%
\documentclass[11pt]{amsart}
%

%
%
\usepackage{graphicx}
\usepackage{amssymb}
\usepackage{verbatim}
\usepackage{array}
\usepackage{latexsym}
\usepackage{enumerate}
\usepackage{amsmath}
\usepackage{amsfonts}
\usepackage{amsthm}
\usepackage[english]{babel}

\textwidth=15cm
\oddsidemargin=1cm
\evensidemargin=1cm

\newcommand{\NN}{\mathbb N}
\newcommand{\ZZ}{\mathbb Z}

\def\cB{\mathcal B}
\def\cC{\mathcal C}

\def\cF{\mathcal F}

\def\cH{\mathcal H}

\def\cX{\mathcal X}
\def\cY{\mathcal Y}

\def\K{\mathbb{K}}

\def\rank{\mbox{\rm rank}}

\def\div{\mbox{\rm div}}

\def\fq{{\mathbb F}_{q}}
\def\fqq{{\mathbb F}_{q^2}}





\newcommand{\ha}{{\textstyle\frac{1}{2}}}

\newcommand{\thr}{\textstyle\frac{3}{2}}

\newcommand{\qa}{\textstyle\frac{1}{4}}

\newcommand{\eit}{\textstyle\frac{1}{8}}


\begin{document}

\title{The $a$-numbers of Fermat and Hurwitz curves}




\author{Maria Montanucci}

\author{Pietro Speziali}
\newtheorem{theorem}{Theorem}[section]
\newtheorem{proposition}[theorem]{Proposition}
\newtheorem{lemma}[theorem]{Lemma}
\newtheorem{corollary}[theorem]{Corollary}
\newtheorem{scholium}[theorem]{Scholium}
\newtheorem{observation}[theorem]{Observation}
\newtheorem{assertion}[theorem]{Assertion}
\newtheorem{result}[theorem]{Result}
{\theoremstyle{definition}
\newtheorem{definition}{Definition}
\newtheorem{example}[theorem]{Example}
\newtheorem{remark}[theorem]{Remark}
\newtheorem*{proposition*}{Proposition}
\newtheorem*{corollary*}{Corollary}
\newtheorem*{lemma*}{Lemma}

\begin{abstract}
For an algebraic curve $\cX$ defined over an algebraically closed field of characteristic $p >0$, the $a$-number $a(\cX)$ is the dimension of the space of exact holomorphic differentials on $\cX$. We compute the $a$-number for an infinite families of Fermat and Hurwitz curves. Our results apply to Hermitian curves giving a new proof for a previous result of Gross \cite{Gross}.
\end{abstract}
\maketitle
\section{Introduction}\label{Intro}
 In this paper, $\cX$ is a (projective, geometrically irreducible, algebraic) curve defined over an algebraically closed field $K$ of characteristic $p >0$. The relevant geometric properties of $\cX$ are encoded in its birational invariants, the most important being the \emph{genus}, the \emph{automorphism group}, and the $p$-\emph{rank}. The latter one is the number of independent unramified abelian $p$-extensions of the function field $K(\cX)$; equivalently, it is the dimension of the $\mathbb{F}_p$-vector space of the $p$-torsion points of the Jacobian of the curve, as well as,   the dimension of the span of the vectors in the space of holomorphic differentials that are fixed under the action of the  Cartier operator.

 In recent years, a further birational invariant of an algebraic curve related to the Cartier operator  has attracted much attention, namely the $a$-\emph{number} $a(\cX)$ defined to be the dimension of the kernel of the Cartier operator, alternatively the dimension of the space of exact holomorphic differentials. As a matter of fact, the $a$-number is also a relevant invariant of the $p$-torsion of the Jacobian of the curve; see \cite{lioort}.

  Computing the $a$-number of a curve may be a rather challenging task. In fact, the exact value of the $a$-number is known just for a few families of curves; see for instance \cite{DFREE}, \cite{EP}, \cite{farnellpries}, \cite{PriesSuzuki}, \cite{Gross}. For this purpose, a Deuring-Shafarevich type formula for the $a$-number would be useful, but it is not known whether such a generalization exists.

 In this paper, we consider Fermat and Hurwitz curves, where a \emph{Fermat curve} $\mathcal{F}_n$ is the nonsingular plane curve of affine equation $ X^n+Y^n+1 = 0$ with $p \nmid n$,
  while a \emph{Hurwitz curve} $\mathcal{H}_n$ is the nonsingular plane curve of affine equation $X^nY+Y^n+X = 0$ with $p \nmid n^2-n+1$. There is a vast literature on Fermat and Hurwitz curves, especially in the case where their number of rational points over some finite field reaches the famous Hasse-Weil bound; see \cite{aguglia-korchmaros-torres} and \cite{stich:herm}. Our main result is to compute the $a$-numbers of Fermat and Hurwitz curves for infinite values of $n$. Since  Hermitian curves are particular Fermat curves, our results apply to Hermitian curves giving a new proof for Gross' result \cite{Gross}.

   Kodama and Washio \cite{hosudatofreddo}, Gonz\'alez \cite{gonzalez}, Price and Weir \cite{CP} and previously Yui \cite{Y} obtained a few results on the ranks of the Cartier operator (and hence on the $a$-number) of Fermat curves. We will point out that our results are independent from their investigation, and in several cases are more general; see Remark \ref{finimola}.

   An essential tool in our investigation is a formula for the Cartier operator due to St\"ohr and Voloch \cite{SVCartier}. As far as we know, such a formula has not been applied previously to the study of $a$-numbers.

\section{The Cartier Operator}
Let $K(\cX)$ be the function field of a curve $\cX$ of genus $g$ defined over an algebraically closed field $K$ of characteristic $p >0$. A separating variable for $K(\cX)$ is an element $ x \in K(\cX) \setminus K(\cX)^p$. Any function $f \in K(\cX)$ can be written uniquely in the form
$$
f = u_0^p+u_1^px+\ldots +u_{p-1}^px^{p-1},
$$
where $u_i \in K(\cX)$ for $i = 0,\ldots,p-1$. Let $\Omega^1$ be the sheaf of differential $1$-forms on $\cX$. Then $\Omega^1$ is a $1$-dimensional vector space over $K(\cX)$. Hence
$$
\omega = fdx
$$
for any $\omega \in \Omega^1$. The \emph{Cartier operator}  $\cC : \Omega^1 \rightarrow \Omega^1$ is a $1/p$-linear map defined by
$$
C(fdx) = u_{p-1}dx.
$$
By a Theorem of Tate, $\cC$ does not depend on the choice of $x$; see \cite{tate}. A differential $\omega$ is holomorphic if $\div (\omega)$ is effective. The set $H^0(\mathcal{X},\Omega_1)$ of holomorphic differentials is a $g$-dimensional $K$-vector subspace of $\Omega^1$ such that $\cC(H^0(\mathcal{X},\Omega_1)) \subseteq H^0(\mathcal{X},\Omega_1)$.
 It can be shown that $\cC$ is the unique map  such that the following properties are satisfied:
\begin{enumerate}
 \item[(\rm{i})] $C (\omega_1 + \omega_2) = C (\omega_1) + C(\omega_2)$,
 \item [(\rm{ii})]$C(f^p\omega) = fC(\omega)$,
 \item[(\rm{iii})] $C(\omega) = 0$ if and only if  $\omega$ is exact,
 \item [(\rm{iv})]$C(f^{p-1}df) = df$,
 \item[(\rm{v})] $C(\omega) = \omega$ if and only if $\omega$ is logarithmic,
 \end{enumerate}
 where a differential $\omega$ is \emph{exact} if $\omega = df$ for some $f \in \K(\cX)$ (\cite{HasseWitt}), \emph{logarithmic} if $ \omega = df/f$ for $f \neq 0$. In this context, two subspaces of $H^0(\mathcal{X},\Omega_1)$ are relevant: the semisimple subspace $H^0(\mathcal{X},\Omega_1)^s $ and the nilpotent subspace $H^0(\mathcal{X},\Omega_1)^n$. The former is spanned by the holomorphic logarithmic differentials, the latter is formed by differentials $\omega$ such that there exists $n \in \NN$ for which $\cC^n(\Omega) = 0$.
 By a Theorem of Hasse and Witt, $H^0(\mathcal{X},\Omega_1) = H^0(\mathcal{X},\Omega_1)^s \oplus H^0(\mathcal{X},\Omega_1)^n$; see \cite{HasseWitt}.
 Also, by a classical result, the $p$-rank $\gamma(\cX)$ of $\cX$ equals  the dimension of $H^0(\mathcal{X},\Omega_1)^s$; see \cite{stbook}.

 The dimension $a(\cX)$ of the kernel of $\cC$ (or equivalently, the dimension of the space of exact holomorphic differentials on $\cX$) is the $a$-\emph{number} of $\cX$.
Then $0\leq a(\cX) +\gamma(\cX)\leq g$. Also, if $a(\cX) = g(\cX)$, then $\gamma(\cX) = 0$, and vice versa.
 The $a$-number arises as an invariant of the $p$-torsion subgroup of $\mathcal{J}(\cX)$;
 however, knowledge of $a(\cX)$ may give information about the Jacobian itself.
 In particular, it can be shown that the number of factors appearing in the decompostion of $\mathcal{J}(\cX)$ into simple principally polarized abelian varieties is at most $\gamma(\cX)+ a(\cX)$; see \cite{farnellpries}.

 Let $\cB = \{\omega_1,\ldots, \omega_g\}$ be a basis of $H^0(\mathcal{X},\Omega_1)$. Then for any $\omega \in H^0(\mathcal{X},\Omega_1) $,
 $$
 C(\omega) = \sum_{i = 1}^g a_{i,j}\omega_i.
 $$
 The matrix $A(\cX) = (a_{ij}^{1/p})$ is the \emph{Hasse-Witt} (or \emph{Cartier-Manin}) \emph{matrix} of $\cX$.

 The $a$-number $a(\cX)$ is the co-rank of $A(\cX)$ (or, equivalently, of $A^p(\cX) = (a_{ij})$).   Because of $1/p$-linearity, the operator $\cC^n$ is represented with respect to $\cB$ by  the matrix
  $$
  (a_{ij})(a_{ij}^{1/p})\ldots(a_{ij}^{1/p^{n-1}}).
  $$
  Also, the $p$-rank $\gamma(\cX)$ coincides with the rank of the matrix
  $$
  (a_{ij})(a_{ij}^{1/p})\ldots(a_{ij}^{1/p^{g-1}}).
  $$
The Cartier operator is related to the number $\cX(\fq)$ of points of $\cX$ over a finite field $\fq$, as the following theorem shows; see \cite{garciasaeed}.
 \begin{theorem}\label{thm:anumbermax}
 Let $\cX$ be a curve defined over a finite field with $q^2$ elements, where $q = p^n$ for some $n \in \NN$. If  $\cX$ is  $\fqq$-maximal (or $\fqq$-minimal), then $ \cC^n = 0$.
 \end{theorem}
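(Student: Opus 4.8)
The plan is to connect the vanishing of $\cC^n$ to the well-known characterization of maximal and minimal curves via the action of Frobenius on the Jacobian. Recall that $\cX$ is $\fqq$-maximal (resp. $\fqq$-minimal) precisely when the number of $\fqq$-rational points attains the Hasse--Weil upper (resp. lower) bound $q^2 + 1 \pm 2gq$, and this happens if and only if every eigenvalue of the Frobenius endomorphism (relative to $\fqq$) equals $-q$ (resp. $+q$). Equivalently, writing $\pi$ for the $\fqq$-Frobenius acting on the Jacobian $\mathcal{J}(\cX)$, maximality/minimality forces $\pi = \mp q$ as an endomorphism, so the characteristic polynomial of $\pi$ is $(T \pm q)^{2g}$.

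The key step is to translate this statement about $\pi$ into a statement about the Cartier--Manin matrix and hence about $\cC$. First I would recall, as developed in the excerpt, that $\cC^n$ is represented with respect to a basis $\cB$ of $H^0(\mathcal{X},\Omega_1)$ by the product
$$
(a_{ij})(a_{ij}^{1/p})\cdots(a_{ij}^{1/p^{n-1}}),
$$
which, after incorporating the $1/p$-linearity bookkeeping, is exactly the matrix representing the $\fqq = \mathbb{F}_{p^{2n}}$-Frobenius reduced modulo $p$; here one uses that $q = p^n$ so that $q^2 = p^{2n}$ and the Cartier operator is the semilinear dual of Frobenius. The classical theorem of Manin identifies the Hasse--Witt matrix (the mod-$p$ reduction of the Cartier--Manin operator iterated appropriately) with the reduction modulo $p$ of the matrix of $\pi$ acting on the Tate module, up to the standard conventions relating the Cartier operator to the Frobenius on $H^1$.

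The main computation is then to exploit that $\pi \equiv \mp q \pmod{p}$ is nilpotent modulo $p$: since $q = p^n$, we have $q \equiv 0 \pmod p$, so both $\pi$ and $-\pi$ reduce to the zero matrix modulo $p$ after accounting for the $n$-fold semilinear iteration that produces $q = p^n$ rather than $p$. Concretely, the matrix of $\cC^n$ is the reduction modulo $p$ of the matrix of $\pi$, and $\pi = \mp q = \mp p^n$ reduces to $0$ because each of the $n$ semilinear factors contributes one power of $p$; hence the representing matrix of $\cC^n$ is the zero matrix, giving $\cC^n = 0$. I expect the main obstacle to be pinning down the precise normalization in the Manin-type correspondence between the iterated Cartier matrix and the Frobenius, specifically verifying that exactly $n$ applications of $\cC$ correspond to the $p^n$-power Frobenius (so that the eigenvalue $\pm q$ reduces to zero), and handling the semilinearity so that the $1/p^k$-twisted entries combine correctly; once that dictionary is fixed, the nilpotency is immediate from $p \mid q$.
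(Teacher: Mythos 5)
The first thing to note is that the paper does not prove this statement at all: Theorem \ref{thm:anumbermax} is quoted from Garcia and Tafazolian \cite{garciasaeed}, so your proposal has to be measured against their argument. Measured that way, there is a genuine gap, and it sits exactly at the point you flag as the ``normalization'' issue: your dictionary between iterates of $\cC$ and Frobenius is off by a factor of two, and once it is fixed correctly your argument only yields the weaker statement $\cC^{2n}=0$. Indeed, the $n$-fold product $(a_{ij})(a_{ij}^{1/p})\cdots(a_{ij}^{1/p^{n-1}})$ represents $\cC^{n}$, whose Serre dual is the $n$-fold iterate of the $p$-linear Frobenius on $H^1(\cX,\cO_{\cX})$; that iterate is $p^n$-linear, i.e.\ it corresponds to the $q$-power Frobenius, which is \emph{not} an endomorphism of $\mathcal{J}(\cX)$ (it maps $\mathcal{J}(\cX)$ to its $q$-power conjugate) and is not the map whose eigenvalues are $\mp q$. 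The endomorphism $\pi$ (the $q^2$-power Frobenius) acts on $H^1(\cX,\cO_{\cX})$ as the $2n$-fold iterate. Since $\pi=\mp q$ and multiplication by $\mp q=\mp p^n$ kills cohomology in characteristic $p$, what your reasoning actually proves is $\cC^{2n}=0$. This cannot be ``split in half'' at the level of mod-$p$ matrices: a matrix $N$ with $N^{2n}=0$ need not satisfy $N^n=0$, and the phrase ``each of the $n$ semilinear factors contributes one power of $p$'' does not correspond to any actual mechanism there.

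The real content of the theorem is precisely the improvement from $2n$ to $n$, and it requires working integrally rather than modulo $p$. In the Garcia--Tafazolian argument one passes to the Dieudonn\'e module $M$ of $\mathcal{J}(\cX)[p^\infty]$ (a free module over the Witt vectors), on which Frobenius $F$ and Verschiebung $V$ satisfy $FV=VF=p$. Maximality gives $\pi=-q$ as an endomorphism (this step of yours is fine, granting Tate's semisimplicity), hence $F^{2n}=-p^n$ on $M$ and therefore $V^{2n}=-p^n$. Now multiply by $F^{n-1}$ and cancel $p^{n-1}$, which is legitimate because $M$ is torsion-free:
$$
p^{n-1}V^{n+1}=F^{n-1}V^{2n}=-p^{n}F^{n-1}, \qquad\text{so}\qquad V^{n+1}=-pF^{n-1},
$$
i.e.\ $V^{n+1}M\subseteq pM$. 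Since $H^0(\cX,\Omega_1)\cong VM/pM$ with $\cC$ induced by $V$, this gives $\cC^n=0$; the minimal case is identical with $+q$ in place of $-q$. The division by $p^{n-1}$ is exactly the step that is invisible modulo $p$, which is why an argument phrased purely in terms of the reduction of $\pi$ and the Hasse--Witt matrix cannot reach the stated exponent.
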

The Cartier operator may be identically zero on $H^0(\cX,\Omega_1)$. In this case,  $\cX$ is \emph{superspecial}; see \cite{nygaard}. If $\cX$ is superspecial, then $a(\cX) = g(\cX)$. In particular, superspecial curves have zero $p$-rank.
The Jacobian $\mathcal{J}(\cX)$ of a superspecial curve is isomorphic to a product of $g$ supersingular elliptic curves.

  In \cite{SVCartier}, St\"ohr and Voloch gave a formula for the action of the Cartier operator $\cC$ on a plane curve $\cX$. We now summarize their main results.

 Let $\cY$ be a plane model of $\cX$ given by an affined equation $\cY:  F(X,Y) = 0 $ with an irreducible polynomial  $F \in K[X,Y]$ of degree $ n >3$. Then $K(\cY) = K(x,y)$ with $F(x,y) = 0$. If $\cY$ has only ordinary singularities, its \emph{canonical adjoints} are the curves of formal degree $n-3$ with at least an $(r-1)$-fold point at every $r$-fold point of $\cY$; see  \cite[Theorem 6.50]{hirschfeld-korchmaros-torres2008}. If $\cY$ is nonsingular, any curve of formal degree $n-3$ is an adjoint. In the general case, the local conditions for the canonical adjoints can be expressed in terms of the conductors of the local rings; see \cite{Gorenstein}.
 The canonical adjoints form a $g$-dimensional $K$-vector space; see \cite[Theorem 6.55]{hirschfeld-korchmaros-torres2008}. The following  theorem is due to  Gorenstein; see \cite[Theorem 12]{Gorenstein}.

  \begin{theorem}\label{thm:gorenstein}
A differential $\omega \in \Omega^1$ is holomorphic if and only if it is of the form $(h(x,y)/F_y)dx$, where $\cH : h(X,Y) = 0 $ is a canonical adjoint.
  \end{theorem}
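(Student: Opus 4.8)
The plan is to produce one explicit nonzero rational differential on $\cX$ against which every element of $\Omega^1$ is measured, and then to translate the holomorphy of $h\,dx/F_y$ into the defining (local) conditions of a canonical adjoint. Let $\pi\colon\cX\to\cY$ be the normalization and set $\eta=dx/F_y$. Differentiating $F(x,y)=0$ yields $F_x\,dx+F_y\,dy=0$, so $\eta=dx/F_y=-dy/F_x$; since $F_x$ and $F_y$ never vanish simultaneously at a smooth point, $\eta$ is a well-defined nonzero element of $\Omega^1$ and $\Omega^1=K(\cX)\,\eta$. The first step is to verify that $\eta$ is regular and nowhere vanishing on the smooth affine locus: where $F_y\neq0$ take $x$ as a local parameter and note $F_y$ is a unit, and where $F_y=0$ (so $F_x\neq0$) use instead the representation $\eta=-dy/F_x$. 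Hence $\div(\eta)$ is supported only at the points at infinity and at the points of $\cX$ over the singular locus of $\cY$, and for $\omega=(h(x,y)/F_y)\,dx=h\,\eta$ holomorphy becomes $\div(h)+\div(\eta)\ge0$, to be checked at those two loci.

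Second, I would read off the degree restriction from the points at infinity. Homogenizing $F$ and computing $\ord_Q(\eta)$ in a chart at each place $Q$ over the line at infinity, one finds that $\eta$ has zeros there whose total degree is $n(n-3)$, and that these zeros cancel the poles of the rational function $h(x,y)$ exactly when $h$ has (formal) degree at most $n-3$. This is the origin of the ``formal degree $n-3$'' clause in the definition of a canonical adjoint, and the bookkeeping is consistent with $\deg\div(\eta)=2g-2$.

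Third, and this is the crux, I would carry out the local analysis over each singular point $P_0$ of $\cY$. There both $F_x$ and $F_y$ vanish, so $\eta$ acquires poles on the branches of $\cX$ above $P_0$, and keeping $h\,\eta$ regular forces $h$ to vanish to a prescribed order along them. The heart of the matter is that this prescribed vanishing is exactly the local adjoint condition: one must show $\div(\eta)$ contributes precisely $-\mathfrak{c}_{P_0}$ over $P_0$, where $\mathfrak{c}_{P_0}$ is the conductor of $\cO_{\cY,P_0}$ in its integral closure. For an ordinary $r$-fold point this specializes to demanding an $(r-1)$-fold point of $\cH$ at $P_0$. The essential input --- and the main obstacle --- is the duality theory of one-dimensional local rings: a plane-curve singularity is a complete intersection, hence Gorenstein, so its dualizing module is invertible and is generated over $\cO_{\cY,P_0}$ by the image of $\eta$, while the conductor measures the gap between $\pi_*\cO_\cX$ and $\cO_\cY$. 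Matching the residue/duality pairing on the two sides then gives that $h\,\eta$ is regular at $P_0$ if and only if $h$ lies in the conductor ideal, i.e.\ satisfies the adjoint condition.

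Assembling the three analyses, $\omega=(h/F_y)\,dx$ is holomorphic if and only if $h$ has formal degree $n-3$ and satisfies the adjoint conditions at every singular point, i.e.\ $\cH\colon h=0$ is a canonical adjoint. Finally, since $h\mapsto h\,\eta$ is injective (as $\eta\neq0$) and the canonical adjoints form a $g$-dimensional space equal to $\dim H^0(\cX,\Omega_1)=g$, this map is a linear isomorphism onto the holomorphic differentials, yielding both implications simultaneously.
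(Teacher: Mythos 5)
The paper offers no proof of this statement at all: it is quoted as a classical result of Gorenstein, with a pointer to \cite[Theorem 12]{Gorenstein}, so your argument can only be compared with the literature, not with an internal argument of the paper. On its own terms, your outline is the standard conductor/duality proof and its architecture is correct: $\eta=dx/F_y=-dy/F_x$ generates $\Omega^1$ over $K(\cX)$ and is regular and nonvanishing above the smooth affine locus, so holomorphy of $h\eta$ is a condition only at infinity and over the singular points; the analysis at infinity produces the formal degree bound $n-3$ (consistent with $\deg \div(\eta)=n(n-3)=2g-2$ in the nonsingular case); the local condition over a singular point is membership of $h$ in the conductor, which for an ordinary $r$-fold point is the $(r-1)$-fold point condition; and surjectivity follows from injectivity of $h\mapsto h\eta$ together with the $g$-dimensionality of the space of canonical adjoints.

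Two caveats. First, your crux --- that $\div(\eta)$ equals minus the conductor divisor over the singular locus, equivalently that $\cO_{\cY,P_0}\cdot\eta$ is the dualizing module and that differentials regular upstairs are $\mathcal{H}om_{\cO_\cY}(\pi_*\cO_\cX,\omega_\cY)$ --- is not an auxiliary tool but is essentially the content of the theorem being proved (this is what Gorenstein's Theorem 12, and Rosenlicht's theory of regular differentials, establish). You invoke it from the general duality theory of complete intersections rather than prove it, and you flag this honestly; that makes your text a correct road map that reduces the statement to its hardest ingredient, rather than a self-contained proof. Second, the concluding dimension count relies on the quoted fact that canonical adjoints form a $g$-dimensional space; in several standard treatments that fact is itself derived from the correspondence with holomorphic differentials, so you should either verify the source proves it independently or avoid it: surjectivity can be obtained directly, since a holomorphic $\omega=f\eta$ forces $f$ to be regular above the affine part of $\cY$, the conductor conditions then force $f$ into $K[x,y]/(F)$ (so $f$ is induced by a polynomial), and the behaviour at infinity bounds its degree by $n-3$. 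A minor further point: your computation at infinity tacitly assumes the line at infinity is in general position (no singular points of $\cY$ there); in the general case the adjoint conditions at infinite singular points must be imposed as well. None of this matters for the paper's applications, where $\cF_n$ and $\cH_n$ are nonsingular and only the degree count at infinity is used.
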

 In particular, the Cartier operator $\cC$ acts on the canonical adjoints; see  \cite[Theorem 1.1]{SVCartier}.
\begin{theorem}\label{sv}
With the above assumptions,
\begin{equation}
\cC\Bigl(h\frac{dx}{f_y}\Bigr) = \Bigl(\frac{\partial^{2p-2}}{\partial x^{p-1}\partial y^{p-1}}(F^{p-1}h)\Bigr)^{\frac{1}{p}}\frac{dx}{F_y}
\end{equation}
 for any $h \in K(\cX)$.
\end{theorem}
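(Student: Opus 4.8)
The plan is to realize the holomorphic differential $h\,dx/F_y$ as a Poincar\'e residue of a rational $2$-form on the plane, and then to exploit the compatibility of the Cartier operator with residues in order to transfer the computation to the ambient surface $\mathbb{A}^2$, where the Cartier operator on top forms is given by an explicit formula in the mixed partial $\partial^{2p-2}/\partial x^{p-1}\partial y^{p-1}$. First I would record the behaviour of the Cartier operator $\cC_2$ on the rational function field $K(X,Y)$ of the affine plane, viewed as a surface: on monomials it acts by $\cC_2(X^aY^b\,dX\wedge dY)=X^{(a+1)/p-1}Y^{(b+1)/p-1}\,dX\wedge dY$ when $a\equiv b\equiv -1\pmod p$ and by $0$ otherwise. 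Using Wilson's theorem to evaluate the product of $p-1$ consecutive integers modulo $p$, this is exactly $\cC_2(G\,dX\wedge dY)=\bigl(\partial^{2p-2}(G)/\partial X^{p-1}\partial Y^{p-1}\bigr)^{1/p}\,dX\wedge dY$ for every $G\in K[X,Y]$, the two sign changes coming from the two variables cancelling; moreover $\cC_2$ is $1/p$-linear, $\cC_2(G^p\eta)=G\,\cC_2(\eta)$.

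Next I would set $\eta=h\,dX\wedge dY/F$ and observe, via the local identity $dF\wedge dX=-F_Y\,dX\wedge dY$, that the Poincar\'e residue of $\eta$ along $\cX:F=0$ is $h\,dx/F_y$ up to sign; thus $h\,dx/F_y=\pm\,\mathrm{Res}_{\cX}(\eta)$. Writing $1/F=F^{p-1}/F^{p}$ and applying $1/p$-linearity to the $p$-th power $1/F^{p}=(1/F)^{p}$ gives
\[
\cC_2(\eta)=\cC_2\Bigl(\tfrac{1}{F^{p}}\,F^{p-1}h\,dX\wedge dY\Bigr)=\frac{1}{F}\Bigl(\frac{\partial^{2p-2}}{\partial X^{p-1}\partial Y^{p-1}}(F^{p-1}h)\Bigr)^{1/p}dX\wedge dY.
\]
Taking the residue of both sides and using the same local identity to pass from the $2$-form back to a differential on $\cX$ yields precisely $\bigl(\partial^{2p-2}(F^{p-1}h)/\partial x^{p-1}\partial y^{p-1}\bigr)^{1/p}\,dx/F_y$; the sign ambiguity is harmless, since the residue is applied to both sides of the equality $\cC_{\cX}\circ\mathrm{Res}_{\cX}=\mathrm{Res}_{\cX}\circ\cC_2$ and the same factor turns $h\,dX\wedge dY/F$ into $h\,dx/F_y$ and the right-hand $2$-form into the asserted differential.

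The step carrying all the weight is the identity $\cC_{\cX}\circ\mathrm{Res}_{\cX}=\mathrm{Res}_{\cX}\circ\cC_2$, namely that the Cartier operator commutes with the Poincar\'e (Gysin) residue along a smooth divisor. I expect this to be the main obstacle: checking it from the defining properties (i)--(v) requires choosing a local parameter transverse to $\cX$, expanding in the inseparable basis $\{X^iY^j\}_{0\le i,j\le p-1}$ of $K(X,Y)$ over $K(X,Y)^p$, and matching the two sides term by term — precisely the place where the relation $F(x,y)=0$ on the curve must be reconciled with the free behaviour of $X,Y$ on the surface, and where one must also handle the singular points of the plane model through the conductor/adjoint conditions.

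An alternative, entirely self-contained route avoids residues: one takes the right-hand side as a candidate for $\cC(h\,dx/F_y)$ and verifies directly that the map $h\,dx/F_y\mapsto\bigl(\partial^{2p-2}(F^{p-1}h)/\partial x^{p-1}\partial y^{p-1}\bigr)^{1/p}\,dx/F_y$ satisfies the characterizing properties of the Cartier operator. Additivity and $1/p$-linearity are immediate from the Leibniz rule for higher derivatives in characteristic $p$, so that the only genuinely computational point is the normalization $\cC(f^{p-1}\,df)=df$; by Tate's uniqueness theorem this identifies the candidate with $\cC$ and again isolates the same local compatibility as the crux of the argument.
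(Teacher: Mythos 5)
First, a point of comparison: the paper itself contains \emph{no} proof of Theorem \ref{sv}; the statement is quoted from St\"ohr and Voloch \cite[Theorem 1.1]{SVCartier}, so your attempt must be judged against what a complete proof requires rather than against an argument in the text. Measured that way, what you have written is a correct and well-informed \emph{reduction}, not a proof. The preliminary material is right: the monomial formula for the two-variable operator, its identification with $\bigl(\partial^{2p-2}/\partial X^{p-1}\partial Y^{p-1}\bigr)^{1/p}$ via Wilson's theorem (this is exactly Remark \ref{ppower}), the device $1/F=F^{p-1}/F^{p}$, and the additivity and $p$-linearity of your candidate map (which is well defined modulo $(F)$ because $\nabla(F^{p}u)=F^{p}\nabla(u)$). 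But in the residue route the entire content of the theorem sits in the commutation $\cC_{\cX}\circ\mathrm{Res}_{\cX}=\mathrm{Res}_{\cX}\circ\cC_{2}$, which you state and explicitly defer: once that lemma is granted the rest is bookkeeping, so you have traded the theorem for an unproven statement of at least equal depth.

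The same happens in your alternative route. Uniqueness from properties (i), (ii), (iv) is a sound mechanism --- applying (iv) to $f=x+c$ for varying constants $c$ (which are $p$-th powers) and comparing coefficients recovers $\cC(x^{k}dx)=0$ for $k<p-1$ and $\cC(x^{p-1}dx)=dx$, so these three properties do pin $\cC$ down; note, though, that this is the uniqueness statement of Section 2, not Tate's theorem, which concerns independence of the separating variable. But the normalization check for your candidate is precisely the nontrivial step: writing $df=(f_{x}F_{y}-f_{y}F_{x})F_{y}^{-1}\,dx$ on $\cX$, property (iv) for the candidate amounts to the Jacobian-type identity
\[
\nabla\bigl(F^{p-1}f^{p-1}(f_{x}F_{y}-f_{y}F_{x})\bigr)=(f_{x}F_{y}-f_{y}F_{x})^{p}\quad\text{on }\cX,
\]
which you do not prove, and which is essentially equivalent to the formula being proved. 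So in both variants the one step you yourself flag as the crux is exactly the step that is missing. To close the gap you would need either a genuine proof of the residue compatibility (for instance by the local computation you sketch, with a parameter transverse to $\cX$), or a direct proof of the displayed identity, e.g.\ via the multiplicativity of the inverse Cartier operation $dg\wedge dh\mapsto g^{p-1}h^{p-1}\,dg\wedge dh$ computed in coordinates by $\nabla$; neither is supplied by your write-up.
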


\begin{remark}\label{ppower}
The differential operator $\nabla$ defined by
$$
\nabla = \frac{\partial^{2p-2}}{\partial x^{p-1}\partial y^{p-1}}
$$
has the property

$$
\nabla\Bigl( \sum_{i,j} c_{i,j}X^iY^j\Bigr) = \sum_{i,j}c_{ip+p-1,jp+p-1}X^{ip}Y^{jp}
$$
and hence $\nabla$ maps polynomials into $p$-th powers of polynomials. By Theorem \ref{sv}, the action of $\cC$ on the canonical adjoints is given by
$$
h\mapsto (\nabla(F^{p-1}h))^{\frac{1}{p}}.
$$
\end{remark}
\section{The $a$-number of Fermat curves}
Since the Fermat curve $\cF_n$ is nonsingular,
$$
\{x^iy^j \mid i+j \leq n-3\}
$$
is a basis for the space of canonical adjoints of $\cF_ n$.
Then by Theorem \ref{thm:gorenstein}, a basis for the space $H^0(\cF_n,\Omega_1)$ of holomorphic differentials on $\cF_n$ is
$$
\cB = \{(x^iy^j/ny^{n-1}) dx \mid  i+j \leq n-3\}.
$$

\begin{theorem}\label{rankfermatgen}
The rank of the Cartier operator $\cC$ on the Fermat curve $\cF_n$ equals the number of pairs $(i,j)$ with $i+j \leq n-3$ such that the system of congruences mod $p$
\begin{equation} \label{key:fermat}
 \left\{
\begin{array}{l}
n(p-1-h)+ i \equiv p-1 \\
 nk+j \equiv p-1 \\
\end{array}
\right.
\end{equation}
has a solution $(h,k)$ for $ 0\leq h \leq p-1, \: 0 \leq  k \leq h$.
\end{theorem}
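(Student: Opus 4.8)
The plan is to make the action of $\cC$ on the basis $\cB=\{\omega_{ij}:=(x^iy^j/F_y)\,dx : i+j\le n-3\}$ (with $F_y=ny^{n-1}$) completely explicit via Theorem \ref{sv} and Remark \ref{ppower}, and then to read off the rank from the resulting matrix. First I would expand $F^{p-1}=(X^n+Y^n+1)^{p-1}$ by the multinomial theorem,
$$F^{p-1}=\sum_{a+b+c=p-1}\binom{p-1}{a,b,c}X^{na}Y^{nb},\qquad \binom{p-1}{a,b,c}=\frac{(p-1)!}{a!\,b!\,c!},$$
so that $F^{p-1}x^iy^j$ is a sum of monomials $X^{na+i}Y^{nb+j}$ over triples $a+b+c=p-1$, $a,b,c\ge0$. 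By Remark \ref{ppower}, $\nabla$ retains exactly those monomials whose two exponents are both $\equiv p-1\pmod p$, i.e. those with $na+i\equiv p-1$ and $nb+j\equiv p-1\pmod p$. Writing $a=p-1-h$ and $b=k$ turns the constraints $0\le a\le p-1$, $b\ge0$, $a+b\le p-1$ into $0\le h\le p-1$ and $0\le k\le h$, and turns the two congruences into the system \eqref{key:fermat}. Thus a pair $(i,j)$ admits a solution of \eqref{key:fermat} precisely when at least one monomial of $F^{p-1}x^iy^j$ survives $\nabla$.

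The decisive observation is that, since $p\nmid n$, the residue $n$ is invertible modulo $p$, so each congruence determines $a$ and $b$ uniquely modulo $p$; as $0\le a,b\le p-1$ this pins down a single candidate pair $(a_0,b_0)$. Hence $\nabla(F^{p-1}x^iy^j)$ contains \emph{at most one} monomial, which is present iff $a_0+b_0\le p-1$ (so that $c_0=p-1-a_0-b_0\ge0$). When present, its coefficient $\binom{p-1}{a_0,b_0,c_0}$ is a unit modulo $p$ by Wilson's theorem, since $a_0,b_0,c_0<p$ force $a_0!\,b_0!\,c_0!$ and $(p-1)!$ to be nonzero. Taking $p$-th roots, I obtain
$$\cC(\omega_{ij})=\binom{p-1}{a_0,b_0,c_0}^{1/p}\omega_{\alpha,\beta},\qquad \alpha=\frac{na_0+i-(p-1)}{p},\quad \beta=\frac{nb_0+j-(p-1)}{p},$$
whenever $a_0+b_0\le p-1$, and $\cC(\omega_{ij})=0$ otherwise. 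A short check shows $\alpha,\beta$ are nonnegative integers with $\alpha+\beta\le n-3$, so $\omega_{\alpha,\beta}\in\cB$; consequently the matrix of $\cC$ in the basis $\cB$ has at most one, necessarily nonzero, entry in each column, and the number of its nonzero columns is exactly the number of $(i,j)$ for which \eqref{key:fermat} is solvable.

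It then remains to show that the support map $\varphi:(i,j)\mapsto(\alpha,\beta)$, defined on the set $S$ of pairs for which \eqref{key:fermat} is solvable, is injective; this is the step I expect to be the crux, since it is what promotes the obvious bound $\rank\cC\le\#\{\text{nonzero columns}\}$ to an equality. The map separates as $i\mapsto\alpha$ and $j\mapsto\beta$, so it suffices to treat one coordinate. From $na_0+i=\alpha p+(p-1)$ with $a_0\ge0$ and $0\le i\le n-3<n$, the integer $i$ is the remainder of $\alpha p+(p-1)$ upon division by $n$ and is therefore uniquely recovered from $\alpha$; the same argument recovers $j$ from $\beta$. Hence $\varphi$ is injective. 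Finally, because $\cC$ is $1/p$-linear over the perfect field $K$, its image is the $K$-span of $\{\cC(\omega_{ij})\}$, which by the above equals $\mathrm{span}\{\omega_{\varphi(i,j)}:(i,j)\in S\}$; injectivity of $\varphi$ makes these $|S|$ distinct basis vectors, so $\rank\cC=|S|$. As $|S|$ is precisely the number of $(i,j)$ with $i+j\le n-3$ for which \eqref{key:fermat} has a solution, the theorem follows.
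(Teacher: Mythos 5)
Your proof is correct, and its skeleton coincides with the paper's: both expand $F^{p-1}x^iy^j$ (your multinomial sum over triples $(a,b,c)$ is exactly the paper's double sum re-indexed by $a=p-1-h$, $b=k$, $c=h-k$, since $\binom{p-1}{h}\binom{h}{k}=\binom{p-1}{a,b,c}$), both invoke Remark \ref{ppower} to identify solvability of \eqref{key:fermat} with the nonvanishing of $\nabla(F^{p-1}x^iy^j)$, and both finish by proving that the nonzero images are linearly independent. Where you genuinely diverge is in that last step. The paper argues that for $(i,j)\neq(i_0,j_0)$ the monomial supports of the two expansions are entirely disjoint, via a case analysis on the exponent equations that uses only the bound $i+j\le n-3$; it never exploits $p\nmid n$ there, and it never identifies which basis differential the image is. You instead use the invertibility of $n$ modulo $p$ to pin down a unique admissible triple $(a_0,b_0,c_0)$, concluding that each $\cC(\omega_{ij})$ is either zero or a nonzero scalar multiple of a \emph{single} basis differential $\omega_{\alpha,\beta}$, and you then recover $(i,j)$ from $(\alpha,\beta)$ by the division algorithm. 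Your route yields a strictly sharper structural statement -- up to nonzero scalars, the Hasse--Witt matrix of $\cF_n$ is a partial permutation matrix -- and it makes explicit the coefficient nonvanishing ($p\nmid\binom{p-1}{a_0,b_0,c_0}$) that the paper uses only implicitly; this extra structure would in fact streamline the subsequent rank counts in Propositions \ref{cor:ferherm} and \ref{ferm:p-1}. The paper's argument is marginally more economical for the theorem as stated, since it needs no tracking of where each differential lands.
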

\begin{proof}
By Theorem \ref{sv}, $\cC((x^iy^j/F_y)dx) = (\nabla(F^{p-1}x^iy^j))^{1/p}dx/F_y$. Therefore, we apply the differential operator $\nabla$ to
\begin{equation}\label{adjfermat}
(x^n+y^n-1)^{p-1}x^iy^j = \sum_{h=0}^{p-1}\sum_{k=0}^h\binom{p-1}{h}\binom{h}{k}(-1)^{h-k}x^{n(p-1-h)+i}y^{nk+j},
\end{equation}
for $i,j$ such that $i+j \leq n-3$.

From the formula in Remark \ref{ppower}, $\nabla((x^n+y^n-1)^{p-1}x^iy^j) \neq 0$ if and only if some $(h,k)$, with $  0 \leq h \leq p-1$ and $ 0 \leq k \leq h$, satisfies  both the following congruences mod $p$:
\begin{equation}
 \left\{
\begin{array}{l}
n(p-1-h)+ i \equiv p-1 \\
 nk+j \equiv p-1
\end{array}
\right..
\end{equation}
 Take $(i,j) \neq (i_0, j_0)$ in such a way that both $\nabla((x^n+y^n-1)x^iy^j)$ and $\nabla((x^n+y^n-1)x^{i_0}y^{j_0})$ are nonzero. We claim that they are linearly independent over $K$. To show independence we prove that for each $(h,k)$ with $ 0 \leq h \leq p-1$ and $ 0 \leq k \leq h$  there is no $(h_0,k_0)$ with $\leq h_0 \leq p-1$ and $0 \leq k_0 \leq  h_0$ such that
\begin{equation}
 \left\{
\begin{array}{l}
n(p-1-h)+ i = n(p-1-h_0) +i_0 \\
 nk+j = nk_0 + j_0
\end{array}
\right.,
\end{equation}
equivalently,
\begin{equation}
 \left\{
\begin{array}{l}
-nh+ i = -nh_0+i_0 \\
 nk+j = nk_0+ j_0.
\end{array}
\right.
\end{equation}
If $j = j_0$, then $h \neq h_0$ by $ i \neq i_0$. We may assume $h > h_0$. Then $i -i_0 = n(h-h_0) >  n$, a contradiction as $i-i_0 \leq n-3$. Similarly, $k > k_0$ yields $j-j_0 > n$, a contradiction.
\end{proof}
For the rest of this Section, $A_n:= A(\cF_n)$ is the matrix representing the $p$-power of the Cartier operator $\cC$ on the Fermat curve $\cF_n$ with respect to the the basis $\cB$.

\begin{proposition}\label{cor:ferherm}
If $n = sp+1$, $s \geq 1$, then $\rank( A_{sp+1}) = \qa s(s-1)p(p+1).$

\end{proposition}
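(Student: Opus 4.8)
The plan is to invoke Theorem~\ref{rankfermatgen} with $n=sp+1$ and reduce the solvability of the system~\eqref{key:fermat} to a transparent condition on the residues of $i$ and $j$ modulo $p$. Since $n=sp+1\equiv 1\pmod p$, the two congruences collapse to
$$
h\equiv i\pmod p,\qquad k\equiv p-1-j\pmod p.
$$
Because $h$ and $k$ are each required to range over the complete residue system $\{0,\dots,p-1\}$, each congruence has a \emph{unique} admissible solution, namely $h=\bar\imath$ and $k=p-1-\bar\jmath$, where $\bar\imath,\bar\jmath$ denote the residues of $i,j$ in $\{0,\dots,p-1\}$. Hence the only remaining requirement $0\le k\le h\le p-1$ becomes $p-1-\bar\jmath\le\bar\imath$, that is, $\bar\imath+\bar\jmath\ge p-1$. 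Thus $\rank(A_{sp+1})$ equals the number of pairs $(i,j)$ with $i,j\ge 0$, $i+j\le sp-2$ and $\bar\imath+\bar\jmath\ge p-1$.

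Next I would count these pairs by writing $i=ap+\bar\imath$ and $j=bp+\bar\jmath$ with $a,b\ge 0$. The decisive point is that, under the hypothesis $\bar\imath+\bar\jmath\ge p-1$ together with the automatic bound $\bar\imath+\bar\jmath\le 2p-2$, the constraint $i+j\le sp-2$ is \emph{equivalent} to the purely integral constraint $a+b\le s-2$: from $(a+b)p\le sp-2-(\bar\imath+\bar\jmath)\le (s-1)p-1$ one gets $a+b\le s-2$, while conversely $a+b\le s-2$ yields $(a+b)p\le sp-2p\le sp-2-(\bar\imath+\bar\jmath)$. This equivalence decouples the residues $(\bar\imath,\bar\jmath)$ from the quotients $(a,b)$, so the count factors as the product of the number of residue pairs with $\bar\imath+\bar\jmath\ge p-1$ and the number of quotient pairs with $a+b\le s-2$.

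Each factor is an elementary lattice-point count: the residue pairs number $\sum_{\bar\imath=0}^{p-1}(1+\bar\imath)=\tfrac12 p(p+1)$, while the quotient pairs number $\binom{s}{2}=\tfrac12 s(s-1)$. Multiplying the two gives $\tfrac14 s(s-1)p(p+1)$, as claimed.

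The main obstacle I anticipate is establishing the boundary equivalence in the second paragraph with exactly the right sharpness: the $-2$ appearing in $i+j\le sp-2$ (inherited from the adjoint degree bound $i+j\le n-3$) and the threshold $\bar\imath+\bar\jmath\ge p-1$ must conspire so that no admissible $(a,b)$ is lost or gained at the edge of the triangle, and the argument leans crucially on $\bar\imath+\bar\jmath$ never exceeding $2p-2$. I would also treat the degenerate case $s=1$ separately, where $sp-2=p-2<p-1$ forces the set of pairs to be empty, in agreement with the formula returning $0$.
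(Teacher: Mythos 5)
Your proposal is correct, and its counting argument is organized genuinely differently from the paper's. Both proofs start identically: invoke Theorem~\ref{rankfermatgen}, use $n\equiv 1\pmod p$ to collapse the system \eqref{key:fermat} to $h\equiv i$, $k\equiv p-1-j\pmod p$, and decompose $i,j$ into quotients and residues mod $p$. From there the paper proceeds by induction on $s$: it treats $s=1$ (via the Hermitian curve and Theorem~\ref{thm:anumbermax}, plus a direct verification) and $s=2$ as base cases, and for $s\geq 3$ counts the increment $\rank(A_{sp+1})-\rank(A_{(s-1)p+1})$ by restricting to the band $(s-1)p-1\leq i+j\leq sp-2$, finding $l+m=s-2$ there, and summing $\ha(s-1)p(p+1)$ over $s$. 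You instead prove a single decoupling equivalence --- under $\bar\imath+\bar\jmath\geq p-1$ the constraint $i+j\leq sp-2$ is equivalent to $a+b\leq s-2$ --- and your verification of both directions of this equivalence is sharp and correct (the forward direction uses $\bar\imath+\bar\jmath\geq p-1$, the converse uses $\bar\imath+\bar\jmath\leq 2p-2$, and $s=1$ degenerates correctly to the empty count). This lets the count factor as $\ha p(p+1)\cdot\binom{s}{2}$ in one stroke, with no induction and no case analysis. What your route buys is brevity and a transparent closed form; what the paper's route buys is uniformity, since the same band-by-band inductive scheme is reused in Propositions~\ref{ferm:p-1} and~\ref{3marzo}, where the residue/quotient decoupling is messier and a clean product formula is not available. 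One small point of rigor worth tightening: you say $h$ and $k$ each range over the complete residue system, whereas in fact $k$ only ranges over $\{0,\dots,h\}$; your argument is still right because the unique candidate $(h,k)=(\bar\imath,\,p-1-\bar\jmath)$ lies in $\{0,\dots,p-1\}^2$ and the extra condition $k\leq h$ is exactly what you convert into $\bar\imath+\bar\jmath\geq p-1$, but the justification should be phrased that way.
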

\begin{proof}
For  $n = sp+1$, then  $i+j \leq n-3$ and system \eqref{key:fermat} modulo $p$ reads
\begin{equation} \label{key:fermat:I}
 \left\{
\begin{array}{l}
 i \equiv h \\
 j \equiv p-1-k.
\end{array}
\right.
\end{equation}

For $s = 1$, $\cF_{p+1}$ is the Hermitian curve over $\mathbb{F}_{p^2}$. From Theorem \ref{thm:anumbermax}, $\rank(A_{p+1}) = 0$. This result can also be obtained by direct computation. In fact, for $n = p+1$, we have $ i+j \leq p-2$ and since $0\leq i,j,h,k \leq p-2$, system \eqref{key:fermat:I} becomes
\begin{equation} \label{key:fermat:1}
 \left\{
\begin{array}{l}
i = h\\
 j = p-1-k. \\
\end{array}
\right.
\end{equation}
From this, $p-1 + (h-k) \leq p-2$, whence $h-k \leq 1$, a contradiction since $h \geq k$. This shows that for each $(i,j)$ there is no $(h,k)$ such that the system \eqref{key:fermat:1} is satisfied. In particular, $\rank(A_{p+1}) = 0$.

Let $n = 2p+1$. For $i+j \leq p-2$ the above argument still works. Therefore $p-1\leq i+j \leq 2p-2$, and  we need to find the solutions $(h,k)$ mod $p$ of the system
\begin{equation} \label{key:fermat:2}
 \left\{
\begin{array}{l}
 i \equiv h \\
 j \equiv p-1-k.
\end{array}
\right.
\end{equation}
Take $l,m \in \ZZ_0^+$ so that $i = lp+h$ and $j = mp+p-1-k$. Then
$$
p-1\leq (l+m+1)p+(h-k-1) \leq 2p-2.
$$
As $h-k-1 \geq -1$,
$$
 \left\{
\begin{array}{l}
 l+m+1 \geq 0\\
 l+m+1 < 2 \\
\end{array}
\right.,
$$
whence $l+m = 0$. Thus, $l = m = 0$ as $l,m \geq 0$. As we have $\sum_{i = 0}^{p-1}(i+1) = \ha p(p+1)$ choices for $(h,k)$, each yielding a different pair $(i,j)$,
$
\rank(A_{2p+1}) =  \ha p(p+1).
$

For $s \geq 3$, $\rank(A_{sp+1})$ equals $\rank(A_{(s-1)p+1})$ plus the number of $(i,j)$ such that there is $(h,k)$ solution of the system mod $p$
$$
\left\{
\begin{array}{l}
 i \equiv h\\
 j \equiv p-1-k,
\end{array}
\right.
$$
with $(s-1)p-1\leq i+j \leq sp-2$. Take $l,m $ so that $i = lp+h$ and $j = mp+p-1-k$. Then
$$
(s-1)p-1\leq (l+m+1)p+(h-k-1) \leq sp-2.
$$
Hence,
$$
 \left\{
\begin{array}{l}
 l+m+1 \geq s-1\\
 l+m+1 < s, \\
\end{array}
\right.
$$
whence, $l+m = s-2$. Since there are exactly $s-1$ different choices for $(l,m) $ and $\ha p(p+1)$ choices for $(h,k)$, we have $\ha(s-1)p(p+1)$ distinct pairs $(i,j)$.  Therefore,
$$
\rank(A_{sp+1}) = \rank(A_{(s-1)p+1}) + \ha(s-1)p(p+1).
$$
Now our claim follows by induction on $s$.
\end{proof}

\begin{remark}
 By \cite[Corollary 1]{hosudatofreddo}, a Fermat curve $\cF_n$ is superspecial if and only if $n \mid p+1$.
\end{remark}

\begin{theorem}\label{thm:anumberhermgen}
If $n = sp+1$, $s \geq 1$,  the $a$-number of the Fermat curve $\cF_{sp+1}$ is
$$
a(\cF_{sp+1}) = \qa s(s+1)p(p-1).
$$
\end{theorem}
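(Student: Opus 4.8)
The plan is to recover $a(\cF_{sp+1})$ as the co-rank of the Cartier operator, inserting the rank already computed in Proposition~\ref{cor:ferherm}. The paper has established that the $a$-number is the co-rank of the matrix $A(\cX)$, so I would start from the identity
\begin{equation*}
a(\cF_{sp+1}) = g(\cF_{sp+1}) - \rank(A_{sp+1}),
\end{equation*}
where $g(\cF_{sp+1})$ is the genus of the Fermat curve. The remaining work is then a genus computation followed by an elementary simplification.

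First I would record the genus. As $\cF_n$ is a nonsingular plane curve of degree $n$, one has $g(\cF_n) = \tfrac{1}{2}(n-1)(n-2)$; for $n = sp+1$ this reads $g(\cF_{sp+1}) = \tfrac{1}{2}sp(sp-1)$. Next, Proposition~\ref{cor:ferherm} supplies $\rank(A_{sp+1}) = \tfrac{1}{4}s(s-1)p(p+1)$. Subtracting,
\begin{equation*}
a(\cF_{sp+1}) = \tfrac{1}{2}sp(sp-1) - \tfrac{1}{4}s(s-1)p(p+1) = \tfrac{1}{4}sp\bigl[\,2(sp-1) - (s-1)(p+1)\,\bigr].
\end{equation*}
It then remains to check that the bracketed quantity factors as $(s+1)(p-1)$: indeed $2(sp-1) - (s-1)(p+1) = 2sp - 2 - (sp + s - p - 1) = sp - s + p - 1 = (s+1)(p-1)$, whence $a(\cF_{sp+1}) = \tfrac{1}{4}s(s+1)p(p-1)$.

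The argument is essentially formal once Proposition~\ref{cor:ferherm} is available, so I expect no genuine obstacle beyond bookkeeping; the one point deserving attention is the factorization of $2(sp-1)-(s-1)(p+1)$ into $(s+1)(p-1)$, which is precisely what produces the clean symmetric product in the statement. As a consistency check one may test $s=1$: the formula returns $a(\cF_{p+1}) = \tfrac{1}{2}p(p-1) = g(\cF_{p+1})$, so the Hermitian curve comes out superspecial, in agreement with $\rank(A_{p+1})=0$ and with Gross' result.
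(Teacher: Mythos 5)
Your proposal is correct and follows the same route as the paper: the authors likewise obtain the result from $a(\cF_{sp+1}) = g(\cF_{sp+1}) - \rank(A_{sp+1})$ together with Proposition~\ref{cor:ferherm}, leaving the algebra implicit as ``a direct computation.'' Your explicit verification of the factorization $2(sp-1)-(s-1)(p+1) = (s+1)(p-1)$ simply fills in the arithmetic the paper omits.
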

\begin{proof}
By a direct computation, as $a(\cF_{sp+1}) = g(\cF_{sp+1})-\rank(A_{sp+1})$. Therefore, the assertion follows by Proposition \ref{cor:ferherm}.

\end{proof}
\begin{remark}
For $s= p^r$, the Fermat curve $\cF_n$ is the Hermitian curve with affine equation $X^{p^{r+1}+1}+Y^{p^{r+1}+1}+1 = 0$ and its $a$-number is equal to $ \qa p^r(p^r+1)p(p-1)$ by Theorem \ref{thm:anumberhermgen}. This agrees with Gross' result; see \cite[Proposition 14.10]{Gross}.
Since the Hermitian curve is $\mathbb{F}_{p^{2(r+1)}}$-maximal, $\cC^{r+1}=0$ by Theorem \ref{thm:anumbermax}. It should be noticed that the ranks of $\cC^n$ for $n \le r$ were determined in \cite{CP}.
\end{remark}

\begin{proposition}\label{ferm:p-1}
If $n = sp-1$, $s \geq 1$, then
$$
\rank (A_{sp-1}) = \begin{cases} \ha(p-2)(p-3), & s = 1,\\
\ha(p-2)(p-3)+p(p-2), &  s = 2,\\
3(p-1)^2, & s= 3, \\
3(p-1)^2+\qa p[(p+1)s^2+ (p-11)s-12(p-2)], & s \geq 4.
\end{cases}
$$
\end{proposition}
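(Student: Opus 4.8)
The plan is to mirror the inductive strategy of Proposition \ref{cor:ferherm}, now applied to the case $n=sp-1$, and to reduce everything to counting solutions of the congruence system \eqref{key:fermat}. Setting $n=sp-1$, the two congruences mod $p$ become $i+(p-1-h)\cdot(-1)\equiv p-1$ and $j-k\equiv p-1$, i.e. after simplification
\begin{equation}
\left\{
\begin{array}{l}
i \equiv h+1\\
j \equiv p-1+k,
\end{array}
\right.
\end{equation}
where the shift by $1$ in the first line (compared to the clean $i\equiv h$ of the $sp+1$ case) is exactly what will produce the more intricate closed form. First I would fix $\rank(A_{sp-1})$ as the number of pairs $(i,j)$ with $i+j\le sp-4$ admitting a solution $(h,k)$ with $0\le h\le p-1$, $0\le k\le h$, and then, writing $i=lp+h'$ and $j=mp+k'$ for the appropriate residues $h',k'$ determined by $(h,k)$, translate the admissibility of $(i,j)$ into a pair of inequalities on $l+m$ together with a feasibility condition on the residues.

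Next I would run the induction on $s$. As in the previous proposition, $\rank(A_{sp-1})$ should equal $\rank(A_{(s-1)p-1})$ plus the count of newly admissible pairs $(i,j)$, namely those in the range $(s-1)p-4<i+j\le sp-4$. The decisive computation is to determine, for each value of $l+m$ forced by this range, how many residue pairs $(h,k)$ (equivalently $(h',k')$) are feasible, and then multiply by the number of $(l,m)$ with the prescribed sum. The base cases $s=1,2,3$ must be computed directly; here the ranges $i+j\le p-4$, $i+j\le 2p-4$, $i+j\le 3p-4$ are short enough that the boundary effects (where the residue condition $0\le k\le h$ truncates the naive count $\tfrac12 p(p+1)$) can be enumerated by hand, and these boundary truncations are what make $s=1,2,3$ deviate from the generic quadratic formula. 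I expect the clean formulas $\ha(p-2)(p-3)$, $\ha(p-2)(p-3)+p(p-2)$, and $3(p-1)^2$ to emerge from careful bookkeeping of exactly which $(h,k)$ survive when $i+j$ is small.

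For $s\ge 4$ I would establish that the per-step increment stabilizes into a quantity that is quadratic in $s$, so that summing the telescoping recursion from the base case $s=3$ yields the stated polynomial $3(p-1)^2+\qa p[(p+1)s^2+(p-11)s-12(p-2)]$. Concretely, once $s$ is large enough that the window $((s-1)p-4,\,sp-4]$ sits comfortably away from the extreme corners of the triangle $i+j\le sp-4$, the number of admissible residue pairs $(h,k)$ attains its full unobstructed value and the increment $\rank(A_{sp-1})-\rank(A_{(s-1)p-1})$ becomes an explicit linear-in-$s$ expression; summing it gives the quadratic. The main obstacle, and the heart of the proof, is the careful case analysis of the residue constraint $0\le k\le h\le p-1$ near the two moving boundaries $i+j=(s-1)p-4$ and $i+j=sp-4$: because $n=sp-1$ is one less than a multiple of $p$, the carries in writing $i,j$ in base $p$ interact with the shift $i\equiv h+1$, $j\equiv p-1+k$, so one must track precisely when a residue pair is excluded. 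This is exactly why the answer splits off the three small cases $s=1,2,3$ before settling into a uniform formula, and I would organize the write-up so that the boundary count is isolated as the one genuinely delicate step, with the induction and summation being routine once that count is pinned down.
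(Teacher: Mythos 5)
Your overall strategy is the same as the paper's: reduce system \eqref{key:fermat} modulo $p$ using $n\equiv -1 \pmod p$, count admissible pairs $(i,j)$ window by window in $i+j$, treat $s=1,2,3$ directly, and induct for $s\ge 4$ with a linear-in-$s$ increment. However, your reduction of the first congruence contains a sign error that is fatal to the counts. With $n=sp-1\equiv -1\pmod p$, the congruence $n(p-1-h)+i\equiv p-1$ gives $i\equiv (p-1)+(p-1-h)\equiv p-(h+2)\pmod p$, \emph{not} $i\equiv h+1$. The two systems are genuinely inequivalent: in the correct one, $h$ is forced to be $(p-2-i)\bmod p$, so the feasibility condition $k\le h$ reads $\bigl((j+1)\bmod p\bigr)\le\bigl((p-2-i)\bmod p\bigr)$, whereas yours reads $\bigl((j+1)\bmod p\bigr)\le\bigl((i-1)\bmod p\bigr)$, and these select different sets of pairs. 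Already the base case $s=1$ breaks: with the correct system, for every $(i,j)$ with $i+j\le p-4$ one has $h=p-2-i\ge j+1=k$, so \emph{every} pair is admissible and $\rank(A_{p-1})=g(\cF_{p-1})=\ha(p-2)(p-3)$ (this is exactly why $\cF_{p-1}$ comes out ordinary). With your system and $p=7$, only $6$ of the $10$ pairs in the triangle $i+j\le 3$ satisfy the feasibility condition, so executing your plan faithfully would give a wrong rank from the very first case.

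Separately, even setting the sign error aside, the proposal defers all of the actual counting, which is where the entire content of the proposition lies: you state that you ``expect'' the closed formulas for $s=1,2,3$ to emerge and that the increment for $s\ge4$ ``becomes an explicit linear-in-$s$ expression,'' but neither is derived. The paper pins these down concretely: for $s\ge 4$ one writes $i=lp-2-h$, $j=mp+k-2$, shows $-1\le l,m\le s-2$ with $l+m\in\{s-4,s-3\}$, where $l+m=s-4$ forces $h=k$ and $l+m=s-3$ forces $h\ge k+1$, and tracks the boundary truncations at $l=-1$ and $m=-1$; this yields $\rank(A_{sp-1})=\rank(A_{(s-1)p-1})+\ha(p-1)(p-2)+(s-3)p+\ha(p-1)[(s-1)p+2]$, which telescopes to the stated polynomial. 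Until you (a) correct the congruence to $i\equiv p-(h+2)$ and (b) carry out this feasibility bookkeeping explicitly, what you have is a plan whose skeleton matches the paper but whose one computational input is wrong.
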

\begin{proof}
For $n = sp-1$, then  $i+j \leq n-3$ and system \eqref{key:fermat} modulo $p$ reads
\begin{equation} \label{ferms1meno1}
 \left\{
\begin{array}{l}
i \equiv p-(h+2) \\
 j-k \equiv p-1. \\
\end{array}
\right.
\end{equation}
For $n = p-1$,  we have  $i+j \leq p-4$ and system \eqref{ferms1meno1} mod $p$ becomes
 $$
 \left\{
\begin{array}{l}
 i \equiv p-(h+2) \\
 j \equiv k-1.  \\
\end{array}
\right.
$$
From this, $j = k-1$. Take $l \in \ZZ$ so that $ i = 2p+lp-2-h$. Then $l <0$ as $i \leq p-4$. Since
$$
i+j = k-1+(l+2)p-2-h \leq p-4
$$
can be written as
$$
(l+2)p-(h-k+3) \leq p-4,
$$
and $0\leq h-k\leq p-3$, we have $-(p+2) \leq -(h-k+3)\leq -3$. Since
$$
(l+2)p-(p+2) \leq (l+2)p-(h-k+3)\leq p-4
$$
holds whenever $l < 0$, for each $(h,k)$ there is a unique admissible pair $(i,j)$. Thus we have $\rank(A_{p-1}) = g(\cF_{p-1}) = \ha(p-2)(p-3)$.

Let $n = 2p-1$. For $i+j \leq p-4$ the above argument still works. Therefore $p-3 \leq i+j \leq 2p-4$ and  again we need to find the solutions  mod $p$ of the system
$$
\left\{
\begin{array}{l}
 i \equiv 2p-2-h \\
 j \equiv p-1 +k-1. \\
\end{array}
\right.
$$
Take $l,m$ so that $i = 2p-2+lp-h$ and $j = mp+p-1+k$. Then
$$
i = (l+2)p-(h+2) \leq 2p-4
$$
yields $l \leq 1$. Also, $l \geq -2$ as $j \geq 0$. Similarly, $-2\leq m \leq 0$. Further, as $3 \leq h-k+3 \leq p+2$, from
$$
p-3 \leq (l+m+3)p-(h-k+3) \leq 2p-4,
$$
we get
$$
1\leq (l+m+3)\leq 2.
$$
 Thus, $l+m \in \{-2,-1\}$.  If $l+m = -2$, then $(l,m) =  (-1,-1)$  and $h = k$ with $1\leq h\leq p-2$. If $l+m = -1$, then $(l,m) \in  \{(-1,0), (0,-1)\}$ and $h \geq k+1$. In the former case, $1\leq h \leq p-2$ and $0\leq k\leq h-1$, in the latter $2\leq h \leq p-1$ and $1\leq k \leq h-1$. Since any admissible $(h,k)$ yields a unique pair $(i,j)$, we have $\rank(A_{2p-1}) = \ha(p-2)(p-3)+p(p-2)$.

Let $n = 3p-1$. For $0\leq i+j \leq 2p-4$, we may still argue as before. For $2p-3 \leq i+j \leq 3p-4$ we need to consider the system mod $p$
$$
\left\{
\begin{array}{l}
 i \equiv 2p-2-h \\
 j \equiv p-1 +k-1.  \\
\end{array}
\right.
$$
This time, $-1\leq l,m\leq 1$. From
$$
2p-3 \leq (m+l+3)p-(h-k+3) \leq 3p-4,\:\: \:-3\leq-(h+k+3)\leq -(p+2)
$$
 we get $l+m \in \{-1,0\}$. If $l+m = -1$, then $(l,m) \in \{(-1,0), (0,-1)\}$ and $h = k$. If  $l+m = 0$, then $(l,m) \in \{(-1,1), (0,0), (1,-1)\}$. If $(l,m) = (-1,1)$, then $k+1\leq h \leq p-2$. If $(l,m) = (0,0)$ or $(1,-1)$, then $1\leq h \leq p-1$. Thus, $\rank(A_{3p-1}) = \rank(A_{2p-1}) + \ha(p-2)(p-2) + (p-1)(p+1)$.

 For $s \geq 4$, $\rank(A_{sp-1})$ equals $\rank(A_{(s-1)p-1})$ plus the number of distinct pairs $(i,j)$ such that there is a solution $(h,k)$  of the system mod $p$
$$
\left\{
\begin{array}{l}
 i \equiv 2p-2-h \\
 j \equiv p-1 +k-1\\
\end{array}
\right.
$$
where $(s-1)p-3 \leq i+j \leq sp-4$. Take $l,m$ such that $i = lp-2-h$ and $j = mp+ k-2$. Then  $-1\leq l,m\leq s-2$ and $m+l \in \{s-4,s-3\}$. The former condition yields $h =k$, the latter $h \geq k+1$. In fact, if $l+m = s-4$, then
\begin{equation}\label{hs4}
-(m+1)p+1 \leq h \leq (2+l)p-2.
\end{equation}
For $l > -1$ or $m\geq 0$, \eqref{hs4} holds for  any $h$, while for $l = -1$, $h \leq p-2$ and for $m = -1$,  $h \geq 1$.

If $m+l = s-3$, then
\begin{equation}\label{uff}
\left\{
\begin{array}{l}
 h \leq (2+l)p-2\\
 k \leq -(m+1)p+1. \\
\end{array}
\right.
\end{equation}
For $l >1$, \eqref{uff} holds for any $h$. Also, $h \leq p-2$ for $l = -1$. For $m \geq 0$, \eqref{uff} holds for any $k$, while $k \geq 1$ for $m =-1$. This means that $\rank(A_{sp-1}) = \rank(A_{(s-1)p-1}) + \ha(p-1)(p-2)+(s-3)p+\ha(p-1)[(s-1)p+2]$. Now, our claim follows by induction on $s$.
\end{proof}
The following result is a corollary of Proposition \ref{ferm:p-1}.
\begin{theorem}\label{chesega}
If $n = sp-1$, $s \geq 1$, the $a$-number of the Fermat curve $\cF_{sp-1}$ is
$$
a(\cF_{sp-1}) =  \qa s(s-1)p(p-1).
$$
\end{theorem}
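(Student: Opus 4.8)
Exactly as in the proof of Theorem \ref{thm:anumberhermgen}, the plan is to read off the statement from the defining relation
$$
a(\cF_{sp-1}) = g(\cF_{sp-1}) - \rank(A_{sp-1})
$$
after feeding in the rank already computed in Proposition \ref{ferm:p-1}. Since $\cF_n$ is a nonsingular plane curve of degree $n$, its genus is $\ha(n-1)(n-2)$ (equivalently, the cardinality of the basis $\cB$); setting $n = sp-1$ gives $g(\cF_{sp-1}) = \ha(sp-2)(sp-3)$. The theorem is thereby reduced to the arithmetical check that $\ha(sp-2)(sp-3)$ minus the four-case rank of Proposition \ref{ferm:p-1} equals $\qa s(s-1)p(p-1)$ in each case.

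First I would dispose of the three exceptional values. For $s=1$ the genus $\ha(p-2)(p-3)$ coincides with $\rank(A_{p-1})$, so $a(\cF_{p-1}) = 0$. For $s=2$ the genus $(p-1)(2p-3)$ minus $\ha(p-2)(p-3)+p(p-2)$ simplifies to $\ha p(p-1)$. For $s=3$ the genus $\ha(3p-2)(3p-3)$ minus $3(p-1)^2$ simplifies to $\thr p(p-1)$. Each of these matches $\qa s(s-1)p(p-1)$ for the corresponding $s$.

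The generic case $s\geq 4$ is where the bookkeeping lies, and I expect it to be the only real obstacle. Expanding the closed form of Proposition \ref{ferm:p-1}, the summand $-\qa p\cdot 12(p-2) = -3p(p-2)$ combines with $3(p-1)^2 = 3p^2-6p+3$ to leave only the constant $3$, so that
$$
\rank(A_{sp-1}) = 3 + \qa p\bigl[(p+1)s^2 + (p-11)s\bigr].
$$
Subtracting this from $g(\cF_{sp-1}) = \ha(s^2p^2 - 5sp + 6)$ and clearing denominators, the constants cancel and the four surviving monomials collapse to
$$
a(\cF_{sp-1}) = \qa\bigl(s^2p^2 - s^2p - sp^2 + sp\bigr) = \qa s(s-1)p(p-1),
$$
as claimed. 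Since the genus of a smooth plane curve is classical and the ranks are supplied by Proposition \ref{ferm:p-1}, no further geometric input is required: the entire content of the proof is the polynomial identity just displayed, verified once for $s\geq 4$ and by inspection for $s\leq 3$.
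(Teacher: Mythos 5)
Your proposal is correct and follows exactly the paper's route: the paper presents Theorem \ref{chesega} as a corollary of Proposition \ref{ferm:p-1}, obtained by subtracting the computed rank from the genus $g(\cF_{sp-1})=\ha(sp-2)(sp-3)$, which is precisely your identity $a(\cF_{sp-1})=g(\cF_{sp-1})-\rank(A_{sp-1})$. The paper leaves the case-by-case algebra implicit; your verification of it (including the cancellation $3(p-1)^2-3p(p-2)=3$ in the $s\geq 4$ case) is accurate and complete.
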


\begin{remark}
By Theorem 3.7, the Fermat curve $\cF_{p-1}$ with $p > 3$ is ordinary. This is a special case of \cite[Theorem 6.102]{hirschfeld-korchmaros-torres2008}, stating that $\cF_n$ is ordinary if and only if $n \mid p-1$.
\end{remark}

\begin{remark}
For special values of $p$ and $n$, more $a$-numbers of Fermat curves can be obtained by combining our results with \cite[Theorems 3-4]{hosudatofreddo}. For instance, by Theorem \ref{thm:anumberhermgen}, the $a$-number of the Fermat curve $\cF_{10}$ for $p = 3$ is $18$. As a consequence of  \cite[Theorem 3]{hosudatofreddo}, $\cF_{10}$ has $a$-number $18$ also when $p = 17$. The same result holds for $p = 7$ by  \cite[Theorem 4]{hosudatofreddo}.
\end{remark}

\section{The $a$-number of Hurwitz curves}
 Since the Hurwitz curve $\cH_n$ is nonsingular,
$$
\{x^iy^j \mid i+j \leq n-2\}.
$$
is a basis for the space of canonical adjoints of $\cH_ n$. From Theorem \ref{thm:gorenstein}, a basis for the space $H^0(\cH_n,\Omega_1)$ of holomorphic differentials on $\cH_n$ is
$$
\cB' = \{(x^iy^j/(x^n+ny^{n-1})) dx \mid  i+j \leq n-2\}.
$$

\begin{theorem}
The rank of the Cartier operator $\cC$ on the Hurwitz curve $\cH_n$ equals the number of pairs $(i,j)$ with $ i+j \leq n-2$ such that the system of congruences mod $p$
\begin{equation}\label{fund:hurwitz}
 \left\{
\begin{array}{l}
nk-h+ i \equiv 0 \\
 n(h-k)+k+j \equiv p-1  \\
\end{array}
\right.
\end{equation}
has a solution $(h,k)$ for  $0\leq  h \leq p-1,\: 0\leq k \leq h$.
\end{theorem}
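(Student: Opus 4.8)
The plan is to mimic exactly the structure of the proof of Theorem~\ref{rankfermatgen}, replacing the Fermat polynomial by the Hurwitz polynomial $F(X,Y)=X^nY+Y^n+X$. First I would recall from Theorem~\ref{sv} and Remark~\ref{ppower} that the action of $\cC$ on the adjoint $x^iy^j$ (with $i+j\leq n-2$, so that it lies in the basis $\cB'$) is governed by $\nabla(F^{p-1}x^iy^j)$, which is nonzero precisely when some monomial of $F^{p-1}x^iy^j$ has both exponents congruent to $p-1$ modulo $p$. So the first genuine step is to expand $(X^nY+Y^n+X)^{p-1}$ by the multinomial theorem. Writing a typical term with the three summands $X^nY$, $Y^n$, $X$ occurring $a$, $b$, $c$ times respectively (so $a+b+c=p-1$), the monomial is a scalar times $X^{na+c}Y^{a+nb}$. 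To match the two-index notation $(h,k)$ of the statement, I would set $k=a$ (the number of $X^nY$ factors) and let $h$ count something like $a+c$, i.e. the total number of factors other than $Y^n$; then $b=p-1-h$ and $c=h-k$, giving the ranges $0\leq k\leq h\leq p-1$ automatically. With this substitution the $X$-exponent becomes $na+c=nk+(h-k)$ and the $Y$-exponent becomes $a+nb=k+n(p-1-h)$.

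Next I would multiply by $x^iy^j$ and impose the two congruences that make $\nabla$ nonvanishing: the total $X$-exponent $nk+(h-k)+i\equiv p-1$ and the total $Y$-exponent $k+n(p-1-h)+j\equiv p-1$, both mod $p$. Using $n(p-1-h)\equiv -n(h+1)\equiv -nh-n\pmod p$ and simplifying, these should collapse to the stated system $nk-h+i\equiv 0$ and $n(h-k)+k+j\equiv p-1$. This is a matter of careful bookkeeping with the exponents and reducing the constants modulo $p$; it is routine but is where a sign or an off-by-$n$ slip is most likely, so I would double-check the reduction of $n(p-1-h)$ and of the $X$-exponent $(nk+h-k)$ against the first congruence $nk-h+i\equiv 0$. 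By Remark~\ref{ppower}, each such solvable system contributes a nonzero image $\nabla(F^{p-1}x^iy^j)$, so the count of admissible pairs $(i,j)$ is an upper bound for, and (after the independence argument) equal to, the rank of $\cC$.

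The remaining and, as in Theorem~\ref{rankfermatgen}, the only subtle step is to show that distinct admissible pairs $(i,j)\neq(i_0,j_0)$ yield \emph{linearly independent} images, so that the rank equals the number of solvable systems rather than merely being bounded by it. Following the Fermat proof, I would suppose some solution $(h,k)$ for $(i,j)$ produces the same pair of $X,Y$-exponents as some solution $(h_0,k_0)$ for $(i_0,j_0)$, write out the two resulting equalities (this time over $\ZZ$, not just mod $p$), and derive a contradiction from the size constraint $|i-i_0|,|j-j_0|\leq n-2$ together with the ranges $0\leq k\leq h\leq p-1$ and $0\leq k_0\leq h_0\leq p-1$. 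The main obstacle is that the Hurwitz exponents $nk+(h-k)$ and $k+n(p-1-h)$ are genuinely bilinear in $(h,k)$ rather than of the simple form $-nh+i$ seen in the Fermat case, so the clean ``$i-i_0=n(h-h_0)>n$'' contradiction does not transcribe verbatim; I would instead isolate the differences in the two exponent equations, solve for a multiple of $n$, and show that the corresponding integer factor is forced to be nonzero yet too large in absolute value to be accommodated by $|i-i_0|\leq n-2$. Once this contradiction is secured the theorem follows, exactly paralleling the closing lines of Theorem~\ref{rankfermatgen}.
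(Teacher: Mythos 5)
Your plan is, in outline, exactly the paper's proof: expand $F^{p-1}x^iy^j$ by the multinomial theorem, use Remark \ref{ppower} to turn nonvanishing of $\nabla$ into a congruence condition on the exponents, and obtain the rank by showing that nonzero images of distinct basis monomials have disjoint monomial supports, hence are linearly independent. Your sketch of the independence step is also sound: the paper does it by the case analysis $h=h_0$; $h\neq h_0,\,k=k_0$; $h\neq h_0,\,k\neq k_0$, and in each case the contradiction is precisely of the type you describe (a nonzero integer multiple of $n$ squeezed into an interval of length at most $n-2$), so that part of your plan would go through.

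The genuine problem sits at the step you yourself flagged as dangerous, and it is not just a sign slip that rechecking the arithmetic would cure: it is the choice of indexing. With your convention --- $k=a$ counts the $X^nY$ factors and $h=a+c$ counts the factors other than $Y^n$, so $b=p-1-h$, $c=h-k$ --- the two congruences are $(n-1)k+h+i\equiv p-1$ and $k-nh+j\equiv n-1 \pmod p$, and these do \emph{not} collapse to \eqref{fund:hurwitz}: the coefficient of $h$ has the wrong sign in both congruences, and no mod-$p$ simplification repairs that. The paper uses the other grouping, $h=a+b$ (the number of factors other than the plain $X$), so that $b=h-k$, $c=p-1-h$; then the monomial is $x^{nk+p-1-h}y^{n(h-k)+k}$ and the congruences are literally \eqref{fund:hurwitz}. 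Your derivation can be salvaged without changing the count, because the substitution $(h,k)\mapsto(p-1-h+k,\,k)$ is an involution of the index triangle $\{0\le k\le h\le p-1\}$ carrying your system to the stated one, so the same pairs $(i,j)$ are admissible either way; but as written, the claim that your congruences simplify to the stated system is false, and the theorem \emph{as stated} is not reached until you either switch to the paper's grouping or insert this bijection explicitly.
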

\begin{proof}
By Theorem \ref{sv}, $\cC((x^iy^j/F_y)dx) = (\nabla(F^{p-1}x^iy^j))^{1/p}dx/F_y$. We argue as in the proof of Theorem \ref{rankfermatgen}. This time, we apply the differential operator $\nabla$ to

\begin{equation}\label{hur:cond}
(x^ny+y^n+1)^{p-1}x^iy^j = \sum_{h=0}^{p-1}\sum_{k=0}^h\binom{p-1}{h}\binom{h}{k}x^{nk-h+p-1+i}y^{n(h-k)+k+j}.
\end{equation}
for $i+j \leq n-2$. From the formula in Remark \ref{ppower}, $\nabla((x^ny+y^n+1)^{p-1}x^iy^j )$ is non-zero if and only if some $(h,k)$ with $0 \leq h \leq p-1$, $ 0 \leq k \leq h$ satisfies both the following congruences mod $p$
$$
 \left\{
\begin{array}{l}
nk-h+ i \equiv 0 \\
 n(h-k)+k+j \equiv p-1.  \\
\end{array}
\right.
$$
 Take $(i,j) \neq (i_0,j_0)$ in such a way that $\nabla((x^ny+y^n+1)^{p-1}x^iy^j ) \neq 0$ and $\nabla((x^ny+y^n+1)^{p-1}x^{i_0}y^{j_0} ) \neq 0$. We claim that they are  independent over $K$. To this end, it is enough to show that for each $(h,k)$ there is no $(h_0,k_0)$ such that
$$
 \left\{
\begin{array}{l}
nk-h+ i = nk_0-h_0+i_0\\
 n(h-k)+k+j n(h_0-k_0)+k_0+j_0. \\
 \end{array}
\right.$$
If $h = h_0$, then $ k \neq k_0$ and $ k > k_0$ may be assumed. Then  $n(k-k_0) = i_0-i$, with $k-k_0$ a positive integer, a contradiction since  $i_0-i \leq n-2$. If  $h \neq h_0$, then $h > h_0$ may be assumed, and $h-h_0 = n(k-k_0) +(i-i_0)$. If $k = k_0$, then $n | (j-j_0) $ with $j-j_0 \leq n-2$, a contradiction. If $k \neq k_0$, then
$$
(n-1)h+k+i+j = (n-1)h_0+k_0+i_0+j_0,
$$
whence,
$$
i-i_0 = (h-h_0) + n(k_0-k) \leq n-2.
$$
Thus, $k_0-k <0$. Further, $h-h_0 \geq 2$ as $i -i_0 \leq (h-h_0)-n$.
This yields $j_0-j \geq n+1$, a contradiction.
\end{proof}
For the rest of this Section, $A'_n$ stands for the matrix representing the Cartier operator $\cC$ on the Hurwitz curve $\cH_n$ with respect to the basis $\cB'$.

\begin{proposition}\label{3marzo}
If $n = sp$, $s \geq 1$, then $\rank (A'_{sp}) =  \qa s(s-1)p(p+1)$.
\end{proposition}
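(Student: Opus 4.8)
The plan is to specialize the fundamental system \eqref{fund:hurwitz} to $n = sp$ and to recognize that it collapses onto exactly the system that governed the Fermat curve $\cF_{sp+1}$, so that the rank count becomes a verbatim copy of Proposition \ref{cor:ferherm}. Since $n = sp \equiv 0 \pmod p$, the congruences $nk - h + i \equiv 0$ and $n(h-k) + k + j \equiv p-1$ reduce to
$$
i \equiv h \pmod p, \qquad j \equiv p-1-k \pmod p,
$$
which is precisely system \eqref{key:fermat:I}. The range of basis exponents is $i+j \leq n-2 = sp-2$, and this is identical to the bound $i+j \leq (sp+1)-3 = sp-2$ attached to $\cF_{sp+1}$; the admissible region $0 \leq k \leq h \leq p-1$ for $(h,k)$ also coincides. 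Hence the pairs $(i,j)$ counted here are literally the same pairs counted in Proposition \ref{cor:ferherm}.

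With the reduction in hand, the next step is to invoke the independence statement established in the rank theorem for Hurwitz curves stated immediately above, which guarantees that distinct solvable pairs $(i,j)$ yield $K$-independent images under $\nabla$, so that the rank equals the number of solvable pairs exactly as in the Fermat setting. Combining these two facts gives at once
$$
\rank(A'_{sp}) = \rank(A_{sp+1}) = \qa s(s-1)p(p+1).
$$

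For a self-contained argument I would instead reproduce the induction of Proposition \ref{cor:ferherm}: writing $i = lp + h$ and $j = mp + (p-1-k)$, one has $i+j = (l+m+1)p + (h-k) - 1$, so that restricting to the layer $(s-1)p-1 \leq i+j \leq sp-2$ forces $l+m = s-2$. There are then $s-1$ admissible pairs $(l,m)$ and $\ha p(p+1)$ admissible pairs $(h,k)$, giving the recursion $\rank(A'_{sp}) = \rank(A'_{(s-1)p}) + \ha(s-1)p(p+1)$ with base value $\rank(A'_p) = 0$; summing the increments yields the closed form.

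The step needing the most care is checking that the passage to $n \equiv 0 \pmod p$ introduces no new cancellation: one must verify that for a solution $(h,k)$ the corresponding monomial in \eqref{hur:cond} survives, i.e.\ that the coefficient $\binom{p-1}{h}\binom{h}{k}$ is nonzero modulo $p$ (true since $0 \leq k \leq h \leq p-1$) and that distinct pairs $(h,k)$ attached to a fixed $(i,j)$ produce distinct monomials, which holds because $n = sp \geq p > p-1 \geq |h-h_0|$ forces the $x$-exponent equation $n(k-k_0) = h-h_0$ to collapse. Once this is confirmed, the count is genuinely the same integer as for $\cF_{sp+1}$ and the proposition follows.
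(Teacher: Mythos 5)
Your proposal is correct, and its main line is a genuinely shorter route than the paper's. The paper proves Proposition \ref{3marzo} from scratch: it reduces system \eqref{fund:hurwitz} mod $p$ to $i\equiv h$, $j\equiv p-1-k$, then runs the cases $s=1,2,3$ and an induction on $s$ by writing $i=lp+h$, $j=mp+p-1-k$ and showing the new layer $(s-1)p-1\leq i+j\leq sp-2$ forces $l+m=s-2$, exactly as in your ``self-contained'' paragraph (so that part of your proposal coincides with the paper's argument, and your recursion and base case match). Your primary argument instead observes that for $n=sp$ the reduced system, the exponent range $i+j\leq sp-2$, and the admissible region $0\leq k\leq h\leq p-1$ are \emph{identical} to those of the Fermat curve $\cF_{sp+1}$ in system \eqref{key:fermat:I}, so that, after invoking the two rank theorems (Theorem \ref{rankfermatgen} and its Hurwitz analogue) to identify rank with the number of solvable pairs, the count can be imported wholesale from Proposition \ref{cor:ferherm}. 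This buys brevity and a conceptual explanation of why $\cH_{sp}$ and $\cF_{sp+1}$ share the same rank and hence the same $a$-number, a coincidence the paper only notes afterwards in a remark; the paper's redone induction buys self-containedness. Your final paragraph checking that the coefficients $\binom{p-1}{h}\binom{h}{k}$ are nonzero mod $p$ and that distinct $(h,k)$ yield distinct monomials (since $n(k-k_0)=h-h_0$ with $|h-h_0|<n$ forces $k=k_0$, $h=h_0$) addresses a cancellation issue that the paper's rank theorems gloss over, and is a welcome addition rather than a redundancy.
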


\begin{proof}
If $n = sp$, then $i+j \leq sp-2$ and system \eqref{fund:hurwitz} mod $p$ reads
\begin{equation}\label{fund:hurwitz2}
 \left\{
\begin{array}{l}
 i-h \equiv 0 \\
 k+j\equiv p-1. \\
\end{array}
\right.
\end{equation}
In particular, for $n = p$, we have  $i+j \leq p-2$ and system \eqref{fund:hurwitz2} becomes
$$
 \left\{
\begin{array}{l}
 i= h\\
 j = p-1-k. \\
\end{array}
\right.
$$
From this, $h+p-1-k \leq p-2$,  whence $h \leq k-1$, a contradiction. As a consequence, there is no pair $(i,j)$ for which the above system admits a solution $(h,k)$. Thus, $\rank(A'_{p}) = 0$.

Let $n = 2p$. For  $i+j \leq p-2$, the above argument still works. Therefore,  $ p-1 \leq i+j \leq 2p-2$ and our goal is to determine for which $(i,j)$ there  is a solution $(h,k)$  of the system mod $p$
$$
 \left\{
\begin{array}{l}
 i-h \equiv 0 \\
 k+j\equiv p-1. \\
\end{array}
\right.
$$
Take $l,m \in \ZZ_0^+$ so that $i = lp+h$ and $j = mp+p-1-k$.  Then, since
$$
p-1 \leq (l+m+1)p+(h-k-1)\leq 2p-2
$$
and $0\leq h-k \leq p-1$, $1 \leq l+m+1 <2$, whence $l = m = 0$. In this way, $\ha p(p+1)$ suitable values for $(i,j)$ are obtained, whence $\rank(A'_{2p}) = \ha p(p+1)$.

Let $n = 3p$. For  $i+j \leq 2p-2$, the above argument still works. Therefore,  $ 2p-1 \leq i+j \leq 3p-2$ and we need to count the pairs $(i,j)$ for which  the system mod $p$
$$
 \left\{
\begin{array}{l}
 i-h \equiv 0 \\
 k+j\equiv p-1.
\end{array}
\right.
$$
has a solution $(h,k)$. Taking $l, m$ as before and  arguing as in the previous step, we get $l+m = 1$, that is $(l,m) \in \{(1,0), (0,1)\}$. Thus, $\rank(A'_{3p}) = \thr p(p+1)$.
For $s \geq 4$, $\rank(A'_{sp})$ equals $\rank(A'_{(s-1)p})$ plus the number of pairs $(i,j)$ with $ (s-1)p-1 \leq i+j \leq sp-2$ such that the system mod $p$
$$
 \left\{
\begin{array}{l}
 i-h \equiv 0 \\
 k+j\equiv p-1  \\
\end{array}
\right.
$$
has a solution. With our usual conventions on $l,m$, a computation shows that such pairs $(i,j)$ are obtained for $l+m = s-2$. Since we have exactly $s-1$ choices for $(l,m)$ and $\ha p(p-1)$ for $(h,k)$, each yielding a different admissible pair $(i,j)$, we have $\rank(A_{sp}) = \rank(A_{(s-1)p}) +\ha(s-1)p(p+1)$. Our  claim  follows by induction on $s$.
\end{proof}

As a corollary of Proposition \ref{3marzo}, we get the following result.

\begin{theorem}\label{primoanumberhurwitz}
If $n = sp$ for $s \geq1$, then the $a$-number of the Hurwitz curve $\cH_{sp}$ equals
$$
a(\cH_{sp}) = \qa s(s+1)p(p-1).
$$
\end{theorem}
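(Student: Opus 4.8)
The plan is to proceed exactly as in the proof of Theorem \ref{thm:anumberhermgen} for the Fermat case: exploit the relation $a(\cH_{sp}) = g(\cH_{sp}) - \rank(A'_{sp})$, which holds because the $a$-number is the co-rank of the matrix representing the Cartier operator, insert the value of $\rank(A'_{sp})$ furnished by Proposition \ref{3marzo}, and simplify. Thus the theorem is a corollary of the preceding proposition, and the only genuine ingredient beyond Proposition \ref{3marzo} is the correct genus of the Hurwitz curve.

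The step requiring care is the genus computation. First I would note that although $\cH_n$ is written with the parameter $n$, its affine defining polynomial $X^nY+Y^n+X$ has degree $n+1$, the leading term being $X^nY$. Since $\cH_n$ is a nonsingular plane curve, the genus–degree formula for smooth plane curves gives
$$
g(\cH_n) = \binom{(n+1)-1}{2} = \binom{n}{2} = \ha n(n-1),
$$
so that $g(\cH_{sp}) = \ha sp(sp-1)$. This is consistent with the fact that the canonical adjoints of $\cH_n$ are indexed by $i+j \leq n-2 = (n+1)-3$, matching the adjoints of a degree-$(n+1)$ curve.

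Having fixed the genus, the rest is a direct algebraic substitution. Plugging $\rank(A'_{sp}) = \qa s(s-1)p(p+1)$ from Proposition \ref{3marzo} into $a(\cH_{sp}) = g(\cH_{sp}) - \rank(A'_{sp})$ yields
$$
a(\cH_{sp}) = \ha sp(sp-1) - \qa s(s-1)p(p+1).
$$
Expanding and collecting terms, the $s^2p^2$, $s^2p$, $sp^2$, and $sp$ coefficients combine so that the right-hand side factors as $\qa p(p-1)\bigl(s^2+s\bigr) = \qa s(s+1)p(p-1)$, which is the claimed value. I would verify the small cases $s=1$ (giving $\ha p(p-1)$, consistent with $\rank(A'_p)=0$) and $s=2$ (giving $\thr p(p-1)$) as a sanity check.

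The main obstacle here is not computational difficulty but rather the bookkeeping subtlety that the Hurwitz curve $\cH_n$ has plane degree $n+1$ rather than $n$; using the naive genus $\ha(n-1)(n-2)$ would produce the wrong answer. Once the genus $\ha n(n-1)$ is correctly identified, the theorem follows by the same one-line computation used for the Fermat curves in Theorem \ref{thm:anumberhermgen}, with all the substantive work already carried out in Proposition \ref{3marzo}.
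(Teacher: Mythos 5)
Your proposal is correct and matches the paper's approach: the paper states this theorem precisely as a corollary of Proposition \ref{3marzo}, obtained via $a(\cH_{sp}) = g(\cH_{sp}) - \rank(A'_{sp})$ with $g(\cH_{sp}) = \ha sp(sp-1)$. Your explicit attention to the fact that $\cH_n$ has plane degree $n+1$ (so the genus is $\binom{n}{2}$, consistent with adjoints indexed by $i+j \leq n-2$) is exactly the right bookkeeping, and the algebra checks out.
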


\begin{remark}\label{fp6}
From Theorem  \ref{primoanumberhurwitz}, the Hurwitz curve $\cH_p$ is superspecial. Actually, this is also a consequence of the $\mathbb{F}_{p^6}$-isomorphism between  $\cH_p$ and $\cF_{p+1}$; see \cite[Section 12.3]{hirschfeld-korchmaros-torres2008}.
\end{remark}

\begin{remark}
By Theorems \ref{thm:anumberhermgen} and \ref{primoanumberhurwitz}, the curves $\cF_{sp+1}$ and $\cH_{sp}$ have the same $a$-number. By a straightforward computation, they also have the same genus. However, in general they are not isomorphic, as the following example shows. Let $p =3$. On the one hand, by \cite[Theorem 3.1]{aguglia-korchmaros-torres}, the Hurwitz curve $\cH_{12}$ is $\mathbb{F}_{3^{18}}$-maximal and hence its $3$-rank is equal to zero. On the other hand, a MAGMA computation shows that the Fermat curve $\cF_{13}$ has $3$-rank equal to $21$. Therefore, $\cH_{12}$ and $\cF_{13}$ are not isomorphic, although their genera and $a$-numbers coincide. \end{remark}

\begin{proposition}
If $n = sp+1$, $s \geq 1$, then
$$
\rank(A'_{sp+1}) = \qa s(s+1)p(p+1).
$$
\end{proposition}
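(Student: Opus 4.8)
The plan is to follow exactly the template of Propositions \ref{3marzo} and \ref{cor:ferherm}: invoke the counting theorem for $\cH_n$ (the rank of $\cC$ equals the number of pairs $(i,j)$ with $i+j\le n-2$ for which \eqref{fund:hurwitz} is solvable in $(h,k)$, $0\le k\le h\le p-1$) and reduce the congruence system modulo $p$. Since $n=sp+1\equiv 1\pmod p$, the two congruences \eqref{fund:hurwitz} collapse to
$$
\left\{
\begin{array}{l}
 i\equiv h-k\\
 j\equiv p-1-h,
\end{array}
\right.
$$
a system that does \emph{not} depend on $s$. This $s$-independence is the feature that makes an inductive argument work: the admissible pairs counted for $n=(s-1)p+1$ form a subset of those for $n=sp+1$, the extra ones being exactly those in the enlarged range $(s-1)p\le i+j\le sp-1$.

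Next I would parametrize. For $0\le k\le h\le p-1$ and $l,m\in\ZZ_0^+$, set $i=lp+(h-k)$ and $j=mp+(p-1-h)$. Because $0\le h-k\le p-1$ and $0\le p-1-h\le p-1$, this is a bijection between quadruples $(l,m,h,k)$ and admissible pairs $(i,j)$: given $(i,j)$ one recovers $h$ from $j\bmod p$, then $k$ from $i\bmod p$, then $l,m$ as the quotients. Moreover
$$
i+j=(l+m)p+(p-1-k).
$$
Since $0\le p-1-k\le p-1$, the bounds $(s-1)p\le i+j\le sp-1$ force $l+m=s-1$ (the upper bound gives $l+m\le s-1$, the lower gives $l+m\ge s-1$), exactly as in Proposition \ref{3marzo}.

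It then remains to count. There are $s$ ordered pairs $(l,m)$ with $l+m=s-1$ and $l,m\ge 0$, and $\sum_{h=0}^{p-1}(h+1)=\ha p(p+1)$ pairs $(h,k)$ with $0\le k\le h\le p-1$; by the bijection above each of the resulting $s\cdot\ha p(p+1)$ quadruples gives a distinct new pair $(i,j)$. This yields the recursion $\rank(A'_{sp+1})=\rank(A'_{(s-1)p+1})+s\cdot\ha p(p+1)$. For the base case $s=1$ a direct count shows the admissible pairs (all with $l=m=0$) fill out precisely the set of $(i,j)$ with $i+j\le p-1$, so $\rank(A'_{p+1})=\ha p(p+1)=\qa\cdot1\cdot2\cdot p(p+1)$. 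Summing gives $\rank(A'_{sp+1})=\ha p(p+1)\sum_{s'=1}^{s}s'=\qa s(s+1)p(p+1)$, and the induction closes with the identity $(s-1)s+2s=s(s+1)$.

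The bookkeeping is entirely routine; the only point requiring genuine care is the step count itself, namely verifying that the newly admissible pairs correspond exactly to $l+m=s-1$ and that the correspondence $(l,m,h,k)\mapsto(i,j)$ is a bijection, so that counting quadruples is the same as counting pairs. Pleasantly, in contrast with the $n=sp-1$ case treated in Proposition \ref{ferm:p-1}, the hypothesis $n\equiv 1\pmod p$ imposes no additional boundary restrictions on $(h,k)$: every one of the $\ha p(p+1)$ choices survives for each admissible $(l,m)$, so the count remains clean and the induction is immediate.
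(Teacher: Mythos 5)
Your proof is correct and follows essentially the same route as the paper's: reduce the system \eqref{fund:hurwitz} mod $p$ using $n\equiv 1\pmod p$, write $i=lp+(h-k)$, $j=mp+(p-1-h)$, show the range $(s-1)p\le i+j\le sp-1$ forces $l+m=s-1$, count $s\cdot\ha p(p+1)$ new pairs, and close by induction. Your explicit verification that $(l,m,h,k)\mapsto(i,j)$ is a bijection onto the admissible pairs is a welcome tightening of a step the paper leaves implicit (and your count avoids a typo in the paper's $n=2p+1$ case, where ``$p(p-1)$ distinct pairs'' should read $p(p+1)$).
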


\begin{proof}
For $n = sp+1$, then $i+j \leq n-2$ and system \eqref{fund:hurwitz} modulo $p$ reads
\begin{equation}
 \left\{
\begin{array}{l}
k-h+ i \equiv 0 \\
 h+j \equiv p-1,  \\
\end{array}
\right.
\end{equation}
for $ 0 \leq h \leq p-1$ and $k = 0\leq k \leq h$. Let $n = p+1$. Then we need to determine for which $(i,j)$ there is a solution $(h,k)$ of the system
$$
 \left\{
\begin{array}{l}
i = h-k\\
 j = p-1-h  \\
\end{array}
\right.
$$
with $i+j\leq p-1$. Since any $(h,k)$ is a solution,  then any pair $(i,j)$ is admissible, whence $\rank(A'_{p+1}) = g(\cH_{p+1}) = \ha p(p+1)$.
Let $n = 2p+1$. For $i+j \leq p-1$, the above argument can be repeated. Therefore $ p \leq i+j \leq 2p-1$. As before, we are led  to consider the system mod $p$
$$
 \left\{
\begin{array}{l}
i \equiv h-k\\
 j \equiv p-1-h.  \\
\end{array}
\right.
$$
Take $l,m$ in such a way that $i = pl+h-k$ and $j= pm-1-h$. Then any pair $(h,k)$ with $0\leq h \leq p-1$ and $0\leq k\leq h$ is a solution of the above system provided that $l+m = 1$. In this way, $p(p-1)$ distinct pairs $(i,j)$ are obtained. Thus, $\rank(A'_{2p+1}) = \ha p(p+1)+p(p+1) = \thr p(p+1)$.
 For $s \geq 3$, $\rank(A'_{sp+1})$ equals $\rank(A'_{(s-1)p+1})$ plus the number of  pairs $(i,j)$ such that the system of congruences mod $p$
$$
\left\{
\begin{array}{l}
i \equiv h-k\\
 j \equiv p-1-h  \\
\end{array}
\right.
$$
with $(s-1)p \leq i+j \leq sp-1$ has a solution $(h,k)$. There are exactly $\ha sp(p+1)$ such pairs, namely $(i,j) = (pl+h-k,pm-1-h)$ such that $l+m = s-1$, $0\leq h \leq p-1$ and $0\leq k\leq h$. Hence, $\rank(A'_{sp+1}) = \rank(A'_{(s-1)p+1})+\ha sp(p+1)$, and our result follows by induction on $s$.
\end{proof}
\begin{theorem}
If $n= sp+1$, for $ s \geq 1$, then the $a$-number of the Hurwitz curve $\cH_{sp+1}$ equals
$$
a(\cH_{sp+1}) = \qa s(s-1)p(p-1).
$$
\end{theorem}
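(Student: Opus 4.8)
The plan is to obtain this as an immediate corollary of the preceding proposition, in exactly the way Theorem \ref{primoanumberhurwitz} was deduced from Proposition \ref{3marzo}. Since the $a$-number is by definition the corank of the matrix representing $\cC$, we have
$$
a(\cH_{sp+1}) = g(\cH_{sp+1}) - \rank(A'_{sp+1}),
$$
and the second term on the right is already supplied by the preceding proposition as $\qa s(s+1)p(p+1)$. Hence the only ingredient I still need to record is the genus.

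First I would fix the genus of $\cH_n$. The defining polynomial $X^nY+Y^n+X$ has degree $n+1$, its term $X^nY$ being the unique one of top degree, so $\cH_n$ is a nonsingular plane curve of degree $n+1$ and the genus--degree formula gives
$$
g(\cH_n) = \binom{n}{2} = \ha n(n-1).
$$
As a consistency check, this is exactly the number of monomials $x^iy^j$ with $i+j\leq n-2$, i.e. the cardinality of the basis $\cB'$ of $H^0(\cH_n,\Omega_1)$ used throughout this section (the canonical adjoints having formal degree $(n+1)-3 = n-2$). Substituting $n = sp+1$ then yields $g(\cH_{sp+1}) = \ha sp(sp+1)$.

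It remains only to subtract and simplify:
$$
a(\cH_{sp+1}) = \ha sp(sp+1) - \qa s(s+1)p(p+1) = \qa\bigl(s^2p^2 - s^2p - sp^2 + sp\bigr) = \qa s(s-1)p(p-1),
$$
which is the asserted value. There is essentially no obstacle here: all of the genuine combinatorial work, namely counting the admissible pairs $(i,j)$, has already been carried out in the rank computation of the preceding proposition, so what remains is the genus identification together with a one-line algebraic simplification. The single point deserving a moment's care is that $\cH_n$ has degree $n+1$ rather than $n$, so that $g(\cH_n) = \ha n(n-1)$; this is precisely what makes the final cancellation produce the factor $(s-1)$ rather than $(s+1)$.
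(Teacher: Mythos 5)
Your proposal is correct and matches the paper's (implicit) argument: the paper states this theorem without proof as an immediate corollary of the preceding proposition, exactly as $a(\cH) = g(\cH) - \rank(A'_{sp+1})$, which is how the earlier Theorem \ref{primoanumberhurwitz} and the Fermat analogue Theorem \ref{thm:anumberhermgen} are deduced. Your genus computation $g(\cH_n) = \ha n(n-1)$ (degree $n+1$, nonsingular) and the final simplification are both accurate.
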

\section{The $a$-number of Fermat curves in characteristic $2$}
Fo $p = 2$, Theorem \ref{thm:anumberhermgen} (or equivalently, Theorem \ref{chesega}) provides the $a$-number of any Fermat curve $\cF_n$; see Theorem \ref{thm:fermatchar2tot} below. Here, we give a direct proof requiring less computation.
\begin{theorem}\label{thm:fermatchar2tot}
Let $p =2$. Then $a(\cF_n) = \eit{(n^2-1)}.$
\end{theorem}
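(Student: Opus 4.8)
The plan is to read the kernel of $\cC$ directly off the St\"ohr--Voloch formula specialized to $p=2$, thereby bypassing the inductive rank computations of Propositions \ref{cor:ferherm} and \ref{ferm:p-1}. First I would note that $p\nmid n$ forces $n$ to be odd. In characteristic $2$ we have $p-1=1$ and $2p-2=2$, so the operator of Remark \ref{ppower} becomes $\nabla=\partial^2/(\partial x\,\partial y)$; by the monomial rule recorded there, $\nabla(X^aY^b)\neq 0$ precisely when $a$ and $b$ are both odd. By Theorem \ref{sv} together with Remark \ref{ppower}, the basis differential $\omega_{i,j}=(x^iy^j/F_y)\,dx$ lies in $\ker\cC$ if and only if $\nabla(F\,x^iy^j)=0$, where $F=X^n+Y^n+1$.

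Next I would expand
\[
\nabla(F\,x^iy^j)=\nabla(x^{n+i}y^j)+\nabla(x^iy^{n+j})+\nabla(x^iy^j)
\]
and carry out a parity analysis of the exponents. Since $n$ is odd, the first summand survives exactly when $i$ is even and $j$ odd, the second when $i$ is odd and $j$ even, and the third when $i$ and $j$ are both odd. These three parity patterns are mutually exclusive, so no cancellation is possible and $\nabla(F\,x^iy^j)$ vanishes if and only if $i$ and $j$ are both even. Hence $\omega_{i,j}\in\ker\cC$ exactly for the \emph{both-even} pairs $(i,j)$. (This is also the specialization of system \eqref{key:fermat} at $p=2$: the admissible pairs $(h,k)\in\{(0,0),(1,0),(1,1)\}$ hit every parity class of $(i,j)$ except the both-even one, which are therefore precisely the pairs for which \eqref{key:fermat} has no solution.)

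It then remains to count the both-even indices subject to $i+j\leq n-3$, $i,j\geq 0$. Setting $i=2a$, $j=2b$ and using that $n-3$ is even, these correspond to pairs $(a,b)$ with $a+b\leq (n-3)/2$, whose number is $\binom{(n+1)/2}{2}=\tfrac{(n-1)(n+1)}{8}=\eit(n^2-1)$. The one point requiring care is that the both-even differentials genuinely exhaust $\ker\cC$; equivalently, that the images $\cC(\omega_{i,j})$ over the remaining indices are linearly independent, so that $a(\cF_n)=g(\cF_n)-\rank(A_n)$ equals the number of both-even pairs. This is exactly the injectivity established in the proof of Theorem \ref{rankfermatgen}, rendered transparent in characteristic $2$ by the mutual exclusivity of the three cases above; I expect it to be the only genuine obstacle, after which the count yields $a(\cF_n)=\eit(n^2-1)$.
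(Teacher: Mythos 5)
Your proposal is correct and follows essentially the same route as the paper: specialize the St\"ohr--Voloch formula at $p=2$, observe via a parity analysis that $\nabla(F\,x^iy^j)=0$ exactly when $i$ and $j$ are both even, and count such pairs with $i+j\leq n-3$. The only differences are cosmetic --- the paper writes out the mixed partial derivative explicitly where you invoke the monomial rule of Remark \ref{ppower}, and you make explicit the linear-independence point (needed to conclude that the kernel is exactly the span of the both-even basis differentials) which the paper leaves implicit via Theorem \ref{rankfermatgen}.
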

\begin{proof}
In this case, it is easier to compute $a(\cF_n)$ directly.  By Theorem \ref{sv}, we need to determine the pairs $(i,j)$ for which
\begin{equation}\label{fer:char2}
\frac{\partial}{\partial x\partial y}\Bigl((x^n+y^n+1)x^iy^j\Bigr)
\end{equation}
is equal to zero.
By a direct computation, \eqref{fer:char2} reads
\begin{equation}\label{fer:char21}
j(n+i)x^{n+i-1}y^{j-1}+ i(n+j)x^{i-1}y^{n+j-1}+ ijx^{i-1}y^{j-1}.
\end{equation}
 Since $n$ is odd, \eqref{fer:char21} is zero if and only if $i$ and $j$ are both even. Since $n-3$ is even, there are  exactly $\ha(n-1)$  even integers $r$ such that $0\leq r \leq n-3$. Also, for $i$ even, there are $\ha(n-1-i)$ choices for $j$. Therefore,
$$
a(\cF_n) = \sum_{i' = 0}^{(n-1)/2} i' = \eit{(n^2-1)}.
$$
\end{proof}
\begin{remark}\label{finimola}
For $p =2$, \cite[Corollary 2]{hosudatofreddo} states $1 \leq \rank(A_n) \leq g(\cF_n)-1$.
 Theorem \ref{thm:fermatchar2tot} gives a  better bound
$$
 3 \leq  \rank(A_n) =  g(\cF_n)-\eit{(n^2-1)} < g(\cF_n)-1.
$$
\end{remark}





\end{document}